\numberwithin{equation}{section}
\theoremstyle{plain}
\newtheorem{theorem}{Theorem}[section]
\newtheorem{lemma}{Lemma}[section]
\def\@journal{Submitted}      % for preprint    August 21, 06
\numberwithin{equation}{section}  % amsmath
\begin{document}

\begin{frontmatter}
  \title{Corrections to LRT on Large Dimensional Covariance Matrix by
    RMT\protect\thanksref{T2}}
  \runtitle{Corrected LRT on Large covariance matrices}
  \thankstext{T1}{The research of this author was supported by CNSF grant 10571020 and NUS
    grant R-155-000-061-112}

  \thankstext{T2}{This version contains full proofs of all results. A
    shorter  version is to be published in a journal.}

  \begin{aug}
    \author{\fnms{Zhidong} \snm{Bai}\thanksref{T1}\ead[label=e1]{stabaizd@nus.edu.sg}}
    \and
    \author{\fnms{Dandan} \snm{Jiang}\ead[label=e2]{stajd@nus.edu.sg}}
    \and
    \author{\fnms{Jian-feng} \snm{Yao}\ead[label=e3]{jian-feng.yao@univ-rennes1.fr}}
    \and
    \author{\fnms{Shurong} \snm{Zheng}\ead[label=e4]{zhengsr@nenu.edu.cn}}

    \runauthor{Z. Bai, D. Jiang, J.  Yao and S. Zheng}

    \affiliation{Northeast  Normal University,
      National University of Singapore,   IRMAR and Universit\'e de Rennes 1}
    \address{Zhidong Bai, Dandan Jiang and  Shurong Zheng \\
      KLASMOE and School of Mathematics and Statistics\\
      Northeast Normal University \\
      5268 People's Road\\
      130024 Changchun, China\\
      and\\
      Department of Statistics and Applied Probability\\
      National University of Singapore\\
      10, Kent Ridge Crescent\\
      Singapore 119260\\
      \printead{e1}, \printead{e2}, \printead{e4}
    }

    \address{Jian-feng Yao \\
      IRMAR and Universit\'e de Rennes 1\\
      Campus de Beaulieu\\
      35042   Rennes Cedex, France\\
      \printead{e3}
    }
  \end{aug}

  \begin{abstract}
    In this paper, we give an explanation to the failure of two
    likelihood ratio procedures for testing about covariance  matrices
    from Gaussian populations
    when the dimension is large compared to the sample size. Next,
    using recent central limit theorems for linear spectral statistics
    of sample covariance matrices and of random F-matrices,
    we propose necessary corrections for  these LR   tests    to cope with
    high-dimensional effects.
    The asymptotic distributions of these  corrected tests under the
    null are given.  Simulations demonstrate that the corrected LR
    tests   yield a realized size close to nominal level
    for both moderate $p$ (around 20) and high dimension, while the traditional LR tests  with
    $\chi^2$ approximation fails.

    Another contribution from the paper is that for testing the
    equality between  two covariance matrices,  the proposed
    correction  applies  equally for non-Gaussian populations  yielding a
    valid  pseudo-likelihood ratio  test.
  \end{abstract}

  \begin{keyword}[class=AMS]
    \kwd[Primary ]{62H15}
    \kwd[; secondary ] {62H10}
  \end{keyword}

  \begin{keyword}
    \kwd{High-dimensional data} \kwd{Testing on covariance matrices}
    \kwd{Mar\v{c}enko-Pastrur distributions} \kwd{Random F-matrices}
  \end{keyword}
\end{frontmatter}

\section{Introduction}
\label{sec:intro}

The rapid development and wide application of computer techniques
permits to collect and store a huge amount data, where the number of
measured variables is usually large. Such high dimensional data
occur in many modern scientific fields, such as micro-array data in
biology, stock market analysis in finance and  wireless
communication networks. Traditional  estimation or test tools are no
more valid, or perform  badly for such high-dimensional data, since
they typically assume a large sample size $n$ with respect to the
number of variables $p$.  A better approach in this high-dimensional
data setting would be based on asymptotic theory which has both $n$
and $p$ approaching infinity. To illustrate this purpose, let us
mention the case of Hotelling's $T^2$-test. The failure of
$T^2$-test for high-dimensional data has been mentioned as early as
by \citet{D58}.  As a remedy, Dempster proposed a so-called
non-exact test. However, the theoretical justification of Dempster's
test arises  much later in \cite{B96} inspired by  modern random
matrix theory (RMT). These authors have found necessary correction
for the $T^2$-test to compensate effects due to high dimension.

In this paper,  we consider two LR tests  concerning covariance
matrices.  We first give a theoretical  explanation for the fail of
these tests in   high-dimensional data context.
Next,  with the aid of random matrix
theory, we provide  necessary corrections to these LR tests to cope
with the   high dimensional effects.

First, we consider the problem of one-sample covariance hypothesis
test. Suppose that $\mathbf x$ follows a $p$-dimensional Gaussian
distribution $N(\mu_p, \Sigma_p)$  and we want to test
\begin{equation}
  H_0 : ~\Sigma_p  = I_p ~,  \label{H0piao}
\end{equation}
where $I_p$ denotes the  $p$-dimensional identity matrix. Note that
testing  $ \Sigma_p= A $  with an arbitrary covariance matrix $A$
can always be reduced to the above null hypothesis by  the
transformation $A^{-\frac{1}{2}}\mathbf{x}$.

Let $(\textbf{x}_1,\cdots,\textbf{x}_n)$ be a
sample from $\mathbf x$, where  we assume  $ p < n $.  The  sample
covariance matrix is
\begin{equation}
  \textbf{S}=\frac{1}{n}\sum\limits_{i=1}^p(\textbf{x}_i-\overline{\textbf{x}})
  (\textbf{x}_i-\overline{\textbf{x}})^*,\label{S}
\end{equation}
and set
\begin{equation}L^*=tr \textbf{S} - \log|\textbf{S}|-p~.   \label{L*}
\end{equation}
The likelihood ratio test statistic  is
\begin{equation}
  T_n=n\cdot L^* .
  \label{singlVclassict}
\end{equation}
Keeping $p$ fixed while letting $n \rightarrow \infty$, then the
classical theory depicts that $T_n$ converges to the
$\chi^2_{\frac{1}{2}p(p+1)}$ distribution
under $H_0$.

However, as it will be shown, this classical  approximation leads to
a test size much higher than the nominal test level in the case of
high-dimensional data,  because $T_n$ approaches infinity for large
$p$.  As seen from Table 1 in \S\ref{sec:test1},  for dimension and
sample sizes $(p, n)= (50, 500)$, the realized size of the test  is
22.5\% instead of the nominal 5\%  level. The result is even worse
for the case $(p, n)= (300, 500)$, with   a 100\% test size.

Based on a recent  CLT for linear spectral statistics (LSS) of
large-dimensional sample covariance matrices \citep{B04}, we
construct a corrected version of $T_n$ in \S\ref{sec:test1}. As
shown by  the simulation results of  \S\ref{sec:simul1}, the
corrected test performs much better in  case of  high dimensions.
Moreover, it also performs correctly   for moderate dimensions like
$p=10$ or  20. For dimension and sample sizes $(p, n)$  cited above,
the sizes of the  corrected test are 5.9\% and 5.2\%, respectively,
both close to the  5\% nominal  level.

The second test problem we consider is about the  equality between
two high-dimensional covariance matrices.
Let ${\bf x}_i = (x_{1i}, x_{2i},\cdots , x_{pi})^{T},
i=1,\cdots,n_1$ and ${\bf y}_j = (y_{1j}, y_{2j}, \cdots,
y_{pj})^{T},$ $ j=1, \cdots, n_2$ be observations from two
$p$-dimensional normal populations $N(\mu_k,\Sigma_k),~ k=1,2$,
respectively. We wish to test the null  hypothesis
\begin{equation}
  H_0: \Sigma_1=\Sigma_2  ~.
  \label{a0}
\end{equation}
The related sample covariance matrices are
$$
\displaystyle{A=\frac{1}{n_1}\sum\limits_{i=1}^{n_1}({\bf
    x}_{i}-\overline{{\bf x}})({\bf x}_{i}-\overline{{\bf x}})^{*},}
\quad \displaystyle{B=\frac{1}{n_2}\sum\limits_{i=1}^{n_2}({\bf
    y}_{i}-\overline{{\bf y}})({\bf y}_{i}-\overline{{\bf y}})^{*},}
$$
where $\overline{{\bf x}}$ , $\overline{{\bf y}}$ are the respective
sample means. Let
\begin{equation}
  L_1=\frac{\left|A\right|^{\frac{n_1}{2}}\cdot
    \left|B\right|^{\frac{n_2}{2}}}
  {\left|c_1A+c_2B\right|^{\frac{N}{2}}},\label{L1AB}
\end{equation}
where  $N=n_1+n_2$  and  $c_k$ denote $\frac{n_k}{N}, k=1,2.$ The
likelihood ratio test statistic  is
$$
T_N=-2\log L_1,
$$
and  when $n_1, n_2 \rightarrow \infty$, we get
\begin{equation}
  T_N=-2\log L_1 \Rightarrow \chi^2_{\frac{1}{2}p(p+1)}\label{classic}
\end{equation}
under $ H_0$. Of cause, in this limit scheme, the data dimension $p$
is held fixed.

However, employing this  $\chi^2$ limit
distribution for  dimensions  like 30 or 40, increases dramatically
the  size of the test. For instance, simulations in
\S\ref{sec:simul2} show that, for dimension and sample sizes $(p,
n_1, n_2)= (40, 800,400)$, the test size  equals 21.2\% instead of
the nominal 5\%  level. The result is worse for the case of $(p,
n_1, n_2)= (80, 1600,800)$, leading to a 49.5\% test size. The
reason for this fail  of  classical LR  test is the following.
Modern RMT indicates that when both dimension and sample size are
large, the likelihood ratio statistic $T_N$ drifts to infinity
almost surely. Therefore, the classical $\chi^2$ approximation leads
to many false rejections of $H_0$ in case of high-dimensional data.

Based on recent CLT for linear spectral statistics of $F$-matrices
from RMT, we propose a correction to this LR  test  in
~\S\ref{sec:test2}. Although this corrected test is constructed
under the asymptotic scheme $n_1 \wedge n_2 \rightarrow +\infty$,
$y_{n_1}={p}/{n_1} \rightarrow y_1 \in (0, 1)$,
$y_{n_2}={p}/{n_2}\rightarrow y_2 \in (0, 1)$, simulations
demonstrate an overall correct behavior including  small or moderate
dimensions  $p$. For example, for the above cited dimension and
sample sizes $(p, n_1, n_2)$, the sizes of the  corrected test equal
5.6\% and 5.2\%, respectively,  both  close to the nominal 5\%
level.

Related works include \citet{LedoitWolf02}, \citet{Sriv05} and
\citet{Schott07}. These authors propose several procedures in the
high-dimensional setting for testing that i) a covariance matrix is
an identity matrix, proportional to an identity matrix (spherecity)
and is a diagonal matrix or ii) several covariance matrices are
equal. These procedures have the following common feature: their
construction involves some well-chosen distance function between the
null and the alternative hypotheses and rely on the first two
spectral  moments, namely the statistics  tr$S_k$ and  tr$S_k^2$
from sample covariance matrices $S_k$. Therefore, the procedures
proposed by these authors are different from the likelihood-based
procedures we consider here. Another  important difference  concerns
the Gaussian assumption on the random variables used in  all these
references. Actually, for testing the equality between  two
covariance matrices,
the correction proposed in
this paper  applies  equally for non-Gaussian and high-dimensional
data leading to a valid  pseudo-likelihood test.

The rest of the paper is organized as following. Preliminary and
useful RMT results are recalled in \S\ref{sec:useful}. In
\S\ref{sec:test1} and \S\ref{sec:test2}, we introduce our results
for the two tests above. Proofs and technical derivations are
postponed to the last section.

\section{Useful results from the random matrix theory }
\label{sec:useful}

We first recall several results from RMT, which will be useful for
our corrections to tests.
For any $p\times p$ square matrix $M$  with real eigenvalues
$\left(\lambda_i^M\right)$, $F_n^M$ denotes the  empirical spectral
distribution (ESD) of $M$, that is,
$$
F_n^M(x) = \frac{1}{p}\sum\limits_{i=1}^{p}\textbf{ 1}_{\lambda_i^M
  \leq x}, \quad \quad x \in \mathbb{R}.
$$
We will consider random matrix $M$ whose ESD $F_n^M$ converges
(in a sense to be precised ) to a limiting spectral distribution
(LSD) $F^M$. To make statistical inference about a parameter $\theta = \int f
(x)dF^M(x)$,   it is natural to use the estimator
$$\widehat{\theta}= \int f (x)dF_n^M(x)=
\frac{1}{p}\sum\limits_{i=1}^p f(\lambda_i^M),
$$
which is a  so-called linear spectral statistic (LSS) of the random
matrix $M$.

\subsection{CLT for LSS of a high-dimensional sample covariance matrix }

Let  $ \{\xi_{ki} \in \mathbb{C}, i, k = 1, 2, \cdots \}$ be a
double array of $i.i.d.$ complex variables with mean 0 and variance
1. Set $\xi_i = (\xi_{1i}, \xi_{2i},\cdots , \xi_{pi})^{T}$ ,  the
vectors $ (\xi_1, \cdots,\xi_{n}) $ is considered as  an $i.i.d$
sample from some $p$-dimensional distribution with mean $0_p$ and
covariance matrix $I_p$. Therefore the sample covariance matrix is
\begin{equation}
  S_n={1\over{n}}\sum\limits_{i=1}^{n}\xi_i\xi_i^*.\label{Sn}
\end{equation}

For $0<\theta\le 1$, let $a(\theta)=(1-\sqrt{\theta})^2$
and $b(\theta)=(1+\sqrt{\theta})^2$. The Mar\v{c}enko-Pastur
distribution of index $\theta$, denoted as $F^\theta$, is the
distribution on $[a(\theta), b(\theta)]$ with the following density
function 
\[  g_\theta(x) =\frac{1}{2\pi \theta x} \sqrt{
  [b(\theta)-x][x-a(\theta)]},\quad  a(\theta)\le x\le b(\theta).
\] 

Let $$y_n=\frac{p}n \rightarrow y \in (0, 1)$$
and $F^y, F^{y_n}$ be the Mar\v{c}enko-Pastur law of index
$y~ \mbox{and}  ~ y_n$, respectively.
Let  $\mathcal{U}$ be an open set of the complex plane,
including   $[ I_{(0, 1)}(y) a(y),  b(y)] $,
and $\mathcal{A}$   be the set of analytic functions $f :
\mathcal{U} \mapsto \mathbb{C}.$  We consider the empirical process
$G_n : = \{G_n(f)\}$ indexed by $\mathcal{A}$ ,
\begin{equation}
  G_n(f)= p\cdot \int_{-\infty}^{+\infty} f(x)\left[F_n-
    F^{y_n}\right] (dx), ~~~~~~\quad f \in  \mathcal{A},\label{Gdef}
\end{equation}
where $F_n$ is the ESD of $S_n$. The following theorem will play a
fundamental role in next derivations, which is a specialization of a
general theorem from \citet{B04}  (Theorem 1.1).

\begin{theorem}
  Assume that
  $ f_1, \cdots ,f_k \in \mathcal{A}$, and  $\{\xi_{ij}\}$  are  $i.i.d.$
  random variables, such that  $E\xi_{11}=0,~ E{|\xi_{11}|^2}=1,
  ~E{|\xi_{11}|}^4 < \infty. $ Moreover, $\frac{p}{n} \rightarrow y
  \in (0, 1) $
  as  $n, p \rightarrow \infty.$\\
  Then: \\
  \textit{(i)} Real Case. \quad  Assume  $\{\xi_{ij}\}$ are real and
  $E(\xi_{11}^4)=3.$  Then the random vector $\left(G_n(f_1),\cdots
  ,G_n(f_k) \right)$  weakly converges to a $k$-dimensional Gaussian
  vector with mean vector,
  \begin{equation}
    m(f_j)=\frac{f_j\left(a(y)\right)+f_j\left(b(y)\right)}{4} -
    \frac{1}{2\pi}\int_{a(y)}^{b(y)}\frac{f_j(x)}{\sqrt{4y-(x-1-y)^2}}dx,
    \quad j=1,\cdots , k,\label{04mean}
  \end{equation}
  and covariance function
  \begin{equation}
    \upsilon\left(f_j,
    f_\ell\right)=-\frac1{2\pi^2}\oint\oint\frac{f_j(z_1)f_\ell(z_2)}{(\underline{m}(z_1)-\underline{m}(z_2))^2}
    d\underline{m}(z_1)d\underline{m}(z_2),\quad j,\ell \in \{1, \cdots,
    k\}\quad \label{04var}
  \end{equation}\\
  where  ~$\underline{m}(z)\equiv m_{\underline{F}^y}(z)$ is the
  Stieltjes Transform of ~ $\underline{F}^y\equiv (1-y)I_{[0,
      \infty)}+yF^y$. The contours
    in (\ref{04var}) are non overlapping and both contain the support of $F^y$.\\

    \textit{(ii)} Complex Case. \quad Assume $\{\xi_{ij}\}$ are complex
    and $E\xi^2_{11}=0$ ,
    $E(|\xi_{11}|^4)=2$.  Then the conclusion of \textit{(i)} also holds, except the mean vector is zero
    and the covariance function is half of the function given in
    (\ref{04var}).\label{T1.1}
\end{theorem}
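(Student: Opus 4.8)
The plan is to obtain Theorem~\ref{T1.1} as the specialization to the identity population covariance ($T_n=I_p$) of the central limit theorem of \citet{B04} (their Theorem~1.1), which governs linear spectral statistics of a general large sample covariance matrix whose population covariance has ESD $H_n\to H$. In that generality the empirical process centered at the companion-type law $F^{y_n,H_n}$ converges to a Gaussian process whose mean and covariance are explicit contour integrals built from the companion Stieltjes transform $\underline m=m_{\underline F^{y,H}}$. Taking $T_n=I_p$ forces $H_n=H=\delta_1$, the Dirac mass at $1$; then $F^{y,H}$ is the Mar\v{c}enko--Pastur law $F^y$, $\underline F^{y,H}=\underline F^y=(1-y)I_{[0,\infty)}+yF^y$, and the centering law $F^{y_n,H_n}$ of \citet{B04} is exactly the $F^{y_n}$ appearing in (\ref{Gdef}). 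The hypotheses of the general theorem are met under $E|\xi_{11}|^4<\infty$ together with $E\xi_{11}^4=3$ in the real case and $E\xi_{11}^2=0,\ E|\xi_{11}|^4=2$ in the complex case, and since $y\in(0,1)$ we have $I_{(0,1)}(y)=1$ so that $\mathcal{U}\supset[a(y),b(y)]$ is precisely the analyticity requirement there. Hence the weak convergence of $(G_n(f_1),\dots,G_n(f_k))$ to a $k$-dimensional Gaussian vector is immediate, and all that remains is to evaluate, for $H=\delta_1$, the mean and covariance prescribed by \citet{B04}.

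For the mean in the real case I would first record that for $H=\delta_1$ the companion transform $\underline m(z)$ is the branch with $\Im(z)\,\Im(\underline m(z))>0$ of the self-consistent equation $z=-1/\underline m+y/(1+\underline m)$, and then introduce the classical substitution $\underline m=-\bigl(1+\sqrt y\,\zeta\bigr)^{-1}$, equivalently $z=\bigl(1+\sqrt y\,\zeta\bigr)\bigl(1+\sqrt y\,\zeta^{-1}\bigr)$. On $|\zeta|=1$, writing $\zeta=e^{i\theta}$, this gives $z=1+y+2\sqrt y\cos\theta$ (which sweeps $[a(y),b(y)]$, with $\zeta=1$ and $\zeta=-1$ the preimages of the edges $b(y)$ and $a(y)$) and $4y-(z-1-y)^2=4y\sin^2\theta$. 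Inserting $H=\delta_1$ into the mean formula of \citet{B04} and applying this change of variable, the integrand collapses to an elementary rational function of $\zeta$; its contour integration splits into a residue part at $\zeta=\pm1$, which produces the endpoint average $\tfrac14\{f(a(y))+f(b(y))\}$, and a $\zeta^{-1}$ term which, integrated over $|\zeta|=1$ and reverted to the variable $x=1+y+2\sqrt y\cos\theta$, produces $-\tfrac1{2\pi}\int_{a(y)}^{b(y)} f(x)\{4y-(x-1-y)^2\}^{-1/2}\,dx$, thus reproducing (\ref{04mean}). The covariance needs no computation: \citet{B04} already delivers it in the very form (\ref{04var}), a double integral in $d\underline m\,d\underline m$ with $\underline m$ the companion Stieltjes transform, so for $H=\delta_1$ it is literally (\ref{04var}); since $f$ is analytic on $\mathcal{U}\supset[a(y),b(y)]$, the two contours may be taken non-overlapping and each enclosing $\mathrm{supp}(F^y)=[a(y),b(y)]$, as stated.

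For the complex case the same substitution is used. By the complex part of \citet{B04}'s theorem the universal mean term present in the real case is absent when $\xi_{11}$ is complex with $E\xi_{11}^2=0$, so the limiting mean vector is zero; and the limiting covariance kernel carries a factor $\tfrac12$ relative to the real one, which is the stated halving of (\ref{04var}). The main obstacle is the explicit evaluation of the real-case mean integral: one must set up the change of variable $z\leftrightarrow\underline m$ with the correct branch and contour orientation, keep track of the square-root branch of $\underline m'$ on $|\zeta|=1$ (this is what makes the factor $\{4y-(x-1-y)^2\}^{1/2}$ appear in the denominator of (\ref{04mean})), and isolate cleanly the residues at $\zeta=\pm1$ that generate the endpoint terms $f(a(y))$ and $f(b(y))$. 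The rest is a matter of matching notation between \citet{B04}'s general statement and the present identity-covariance special case.
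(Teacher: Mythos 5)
Your proposal is correct and coincides with what the paper does: Theorem~\ref{T1.1} is not proved in the paper at all, but is quoted verbatim as the specialization of Theorem~1.1 of \citet{B04} to the identity population covariance ($T_n=I_p$, hence $H=\delta_1$ and the centering law $F^{y_n}$). Your additional sketch of how the general contour-integral mean collapses to (\ref{04mean}) under the substitution $\underline m=-(1+\sqrt y\,\zeta)^{-1}$, $z=(1+\sqrt y\,\zeta)(1+\sqrt y\,\zeta^{-1})$, is consistent with the computation already carried out in \citet{B04} for this special case, so nothing further is needed.
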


It is worth noticing that Theorem 1.1 in \citet{B04} covers more general
sample covariance matrices of  form  $S'_n=T_n^{1/2}S_nT_n^{1/2}$ where $(T_n)$ is a
given sequence of positive-definite Hermitian matrices. In the
``white'' case $T_n\equiv I$ as considered here, in a recent
preprint \citet{PL08}, the authors offer a new extension of the CLT 
where the constraints   $E|\xi_{11}|^4=3$ or 2, as stated above,  are
removed.

\subsection{CLT for LSS of high-dimensional F matrix }

Let  $ \{\xi_{ki} \in \mathbb{C}, i, k = 1, 2, \cdots \}$ and
$\{\eta_{kj} \in \mathbb{C}, j, k = 1, 2, \cdots \}$ are two
independent double arrays of $i.i.d.$ complex variables with mean 0
and variance 1. Write $\xi_i = (\xi_{1i}, \xi_{2i},\cdots ,
\xi_{pi})^{T}$ and $\eta_j = (\eta_{1j}, \eta_{2j}, \cdots,
\eta_{pj})^{T}$. Also, for any positive integers $n_1, n_2$, the
vectors $ (\xi_1, \cdots,\xi_{n_1}) $ and $ (\eta_1, \cdots,
\eta_{n_2}) $ can be thought as independent samples of size $ n_1 $
and $ n_2 $, respectively, from some $p$-dimensional distributions.
Let $S_1$ and $S_2$ be the associated sample covariance matrices, ~
$i. e.$
$$
S_{1}={1\over{n_1}}\sum\limits_{i=1}^{n_1}\xi_i\xi_i^* \quad
\mbox{and}\quad
S_{2}={1\over{n_2}}\sum\limits_{j=1}^{n_2}\eta_j\eta_j^*$$ Then, the
following  so-called  F-matrix generalizes  the classical Fisher-statistics for
the present $p$-dimensional case,
\begin{equation}
  V_n=S_{1}S_{2}^{-1}\label{F}
\end{equation}
where $n_2 > p$. Here we use the notation $n=(n_1, n_2)$.

Let
\begin{equation}
  y_{n_1}=\frac{p}{n_1}
  \rightarrow y_1 \in (0, 1),
  ~ y_{n_2}=\frac{p}{n_2} \rightarrow y_2
  \in (0, 1).\label{limitscheme}
\end{equation}
Under suitable moment conditions,  the ESD $F_n^{V_n}$ of $V_n$
has a  LSD  $F_{y_1, y_2}$,
which has a density [See P72 of \cite{B06}], given by
\begin{equation}
  \displaystyle{\ell(x)}=\left\{
  \begin{array}{ll} & \displaystyle{\frac {(1-y_{2})\sqrt{(b-x)(x-a)}}{2\pi x(y_{1}+y_{2}x)},
      \quad ~a \leq x \leq b,}\\[6mm]
    &\displaystyle{0, \quad\quad \quad\quad \mbox{otherwise}.}
  \end{array}
  \right.\label{LSDden}
\end{equation}
where $
a=(1-y_{2})^{-2}\left(1-\sqrt{y_{1}+y_{2}-y_{1}y_{2}}\right)^2
~\mbox{and}~
b=(1-y_{2})^{-2}\left(1+\sqrt{y_{1}+y_{2}-y_{1}y_{2}}\right)^2.$

Similar to previously, let $\widetilde{\mathcal{U}}$ be an open set
of the complex plane,
including  the interval
$$\left[I_{(0,1)}(y_{1})
  \frac{(1-\sqrt{y_{1}})^2}{(1+\sqrt{y_{2}})^2},\quad
  \frac{(1+\sqrt{y_{1}})^2}{(1-\sqrt{y_{2}})^2}\right], $$ and
$\widetilde{\mathcal{A}}$  be the set of analytic functions $f :
\widetilde{\mathcal{U}} \mapsto \mathbb{C}.$  Define the empirical
process $\widetilde{G_n} : = \{\widetilde{G_n}(f)\}$ indexed by
$\widetilde{\mathcal{A}}$
\begin{equation}
  \widetilde{G_n}(f)= p\cdot \int_{-\infty}^{+\infty}
  f(x)\left[F_{n}^{V_n}- F_{y_{n_1}, y_{n_2}}\right] (dx), ~~~~~~f \in
  \widetilde{\mathcal{A}}. \label{Gdef2}
\end{equation}
Here $F_{y_{n_1}, y_{n_2}}$ is the limiting distribution in
(\ref{LSDden}) but with $y_{n_k}$ instead of $y_k, k=1,2.$

Recently, \citet{Z08} establishes  a general  CLT for LSS of
large-dimensional F matrix.  The following theorem is a simplified
one quoted from it, which will play an important role.

\begin{theorem}
  Let  $ f_1, \cdots ,f_k \in \widetilde{\mathcal{A}}$,
  and assume:\\
  For each p,  $(\xi_{ij_1})$  and $(\eta_{ij_2})$ variables are $ i.i.d. $,
  $ 1\leq i\leq p, ~ 1\leq
  j_1 \leq n_1, ~  1\leq j_2 \leq n_2. $ $ E\xi_{11}=E\eta_{11}=0, $
  $E|\xi_{11}|^4=E|\eta_{11}|^4<\infty,$~ $y_{n_1}=\frac{p}{n_1}
  \rightarrow y_1 \in (0, 1),\quad
  y_{n_2}=\frac{p}{n_2} \rightarrow y_2 \in (0, 1).$\\
  Then\\
  \textit{(i)} Real Case. \quad Assume $ (\xi_{ij}) $  and $
  (\eta_{ij}) $ are real, $ E|\xi_{11}|^2=E|\eta_{11}|^2=1 $, then the
  random vector $ \left(\widetilde{G_n}(f_1),
  \cdots,\widetilde{G_n}(f_k)\right) $ weakly converges to a
  k-dimensional Gaussian vector  with the mean vector
  \begin{eqnarray}
    m(f_j)&=&\lim\limits_{r \rightarrow
      1_{+}}\left[(\ref{E1})+(\ref{E1betax})+(\ref{E1betay})\right]\nonumber\\
    &&\quad\frac{1}{4\pi i}\oint_{|\zeta|=1}
    f_j(z(\zeta))\left[\frac{1}{\zeta-{1\over r}}+\frac{1}{\zeta+{1\over
          r}}-\frac{2}{\zeta+{y_2\over
          {hr}}}\right]d\zeta \label{E1}\\
    &&\quad + \frac{\beta\cdot y_1(1-y_2)^2}{2\pi i \cdot
      h^2}\oint_{|\zeta|=1}f_j(z(\zeta))\frac{1}{(\zeta+\frac{y_2}{hr})^3}d\zeta\label{E1betax}\\
    &&\quad + \frac{\beta\cdot y_2(1-y_2)}{2 \pi i \cdot
      h}\oint_{|\zeta|=1}f_j(z(\zeta))\frac{\zeta +
      \frac{1}{hr}}{(\zeta+\frac{y_2}{hr})^3}d\zeta ,\label{E1betay}
    \quad\quad j=1, \cdots, k,
  \end{eqnarray}
  where $
  z(\zeta)=(1-y_2)^{-2}\left[1+h^2+2h\mathcal{R}(\zeta)\right], \quad
  h =\sqrt{y_1+y_2-y_1y_2}, $ $ \beta=E|\xi_{11}|^4-3,  $
  and the covariance function   as $1<r_1<r_2 \downarrow 1$
  \begin{eqnarray}
    \lefteqn{\upsilon(f_j, f_\ell)= \lim\limits_{1 < r_1 < r_2
        \rightarrow
        1_{+}}\left[(\ref{cov1})+(\ref{cov1betax}))\right]}\nonumber\\
    &&  -\displaystyle\frac{1}{2\pi^2}\oint_{|\zeta_2|=1}
    \oint_{|\zeta_1|=1}\frac{f_j(z(r_1\zeta_1))f_\ell(z(r_2\zeta_2))r_1r_2}{(r_2\zeta_2-r_1\zeta_1)^2}
    d\zeta_1d\zeta_2,\label{cov1}\\
    && -\frac{\beta \cdot (y_1+y_2)(1-y_2)^2}{4\pi^2h^2}
    \oint_{|\zeta_1|=1}\frac{f_j\left( z(\zeta_1)
      \right)}{(\zeta_1+\frac{y_2}{hr_1})^2}d\zeta_1
    \oint_{|\zeta_2|=1}\frac{f_\ell\left( z(\zeta_2)
      \right)}{(\zeta_2+\frac{y_2}{hr_2})^2}d\zeta_2
    \label{cov1betax}\\
    && j, \ell \in \{1,\cdots , k\}.\nonumber
  \end{eqnarray}

  \textit{(ii)}  Complex Case. Assume $ (\xi_{ij}) $  and $
  (\eta_{ij}) $ are complex,  $ E(\xi_{11}^{2})=E(\eta_{11}^{2})=0, $
  then the conclusion of
  \textit{(i)}  also holds, except the means are $ \lim\limits_{r
    \rightarrow 1_{+}}\left[(\ref{E1betax})+(\ref{E1betay})\right] $ and
  the covariance function is  $ \lim\limits_{1 < r_1 < r_2 \rightarrow
    1_{+}}\left[\displaystyle\frac{1}{2}
    \cdot(\ref{cov1})+(\ref{cov1betax})\right], $ where $
  \beta=E|\xi_{11}|^4-2. ~
  $
  \label{T2.1}
\end{theorem}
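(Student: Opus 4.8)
The statement is a specialization to the ``white'' case of the general CLT for linear spectral statistics of $F$-matrices proved in \citet{Z08}, so the plan is to deduce it from that result and then make the resulting contour integrals explicit. An alternative, more self-contained route would be to condition on $S_2$, use that $V_n = S_1 S_2^{-1}$ has the same eigenvalues as $S_2^{-1/2} S_1 S_2^{-1/2}$, which is conditionally a sample covariance matrix with population covariance $S_2^{-1}$, apply the general covariance-matrix CLT of \citet{B04} (the non-white case noted in the remark after Theorem \ref{T1.1}) conditionally, and then integrate out the fluctuations of $S_2$; this works but needs the non-identity version of that CLT, so I will instead quote \citet{Z08} directly.

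\textbf{Step 1 (invoke the general CLT).} Under the stated moment and ratio conditions, \citet{Z08} yields that the centered process $p\,[F_n^{V_n} - F_{y_{n_1},y_{n_2}}]$, paired with $f_1,\dots,f_k$, converges to a Gaussian vector whose mean and covariance are contour integrals of the $f_j$ against kernels built from the Stieltjes transform $\underline m$ of the companion limit $\underline F_{y_1,y_2}$, plus a fourth-cumulant correction proportional to $\beta$. Establishing this general CLT itself rests on the two usual pillars --- a martingale CLT for the finite-dimensional laws of $M_n(z) = p\,[m_n(z) - m_n^0(z)]$ (the Stieltjes transforms of $F_n^{V_n}$ and of the centering distribution $F_{y_{n_1},y_{n_2}}$) and tightness of $M_n$ over a contour enclosing the support --- which I take as given here.

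\textbf{Step 2 (make it explicit).} From the density \eqref{LSDden} one gets a rational expression for $\underline m(z)$ and its functional inverse; the map $z = z(\zeta) = (1-y_2)^{-2}\bigl[1 + h^2 + 2h\,\mathcal{R}(\zeta)\bigr]$, with $h = \sqrt{y_1+y_2-y_1y_2}$, sends the unit circle $|\zeta|=1$ onto $[a,b]$ and, for $r>1$, sends $|\zeta|=r$ onto a contour around $[a,b]$ on which $\underline m$ remains univalent. Substituting $z_\bullet = z(r\zeta_\bullet)$ into the abstract integrals of Step 1 turns each kernel into a rational function of $\zeta$; reading off the residues at the poles lying inside $|\zeta|=1$ --- located, after the scaling, at $\zeta = \pm 1/r$ and $\zeta = -y_2/(hr)$ --- produces exactly the mean terms \eqref{E1}, \eqref{E1betax}, \eqref{E1betay} and the covariance terms \eqref{cov1}, \eqref{cov1betax}. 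Taking $r \to 1_+$ (resp. $1 < r_1 < r_2 \to 1_+$), legitimate by analyticity of the $f_j$ near $[a,b]$, gives $m(f_j)$ and $\upsilon(f_j,f_\ell)$; the passage from $M_n$ to $\widetilde{G_n}(f)$ is the Cauchy representation $\widetilde{G_n}(f) = -\tfrac{1}{2\pi i}\oint f(z)\,M_n(z)\,dz$ combined with the continuous mapping theorem. In the complex case one invokes $E\xi_{11}^2 = 0$, which halves the ``Gaussian'' part \eqref{cov1} of the covariance and removes the first mean term \eqref{E1}, and replaces $\beta = E|\xi_{11}|^4-3$ by $\beta = E|\xi_{11}|^4-2$.

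\textbf{Main obstacle.} Apart from quoting \citet{Z08}, the delicate part is the contour calculus of Step 2: one must check that $z(\zeta)$ is a biholomorphism between the relevant annular neighborhoods, track precisely which singularities of the transformed integrands lie inside the unit circle, and ensure that no spurious contributions come from the behavior at $\zeta = 0$ or $\zeta = \infty$. The bookkeeping of the fourth-moment ($\beta$) corrections through this change of variables, together with the factor-of-$2$ and sign changes that distinguish the real and complex cases, is where constant or sign errors are most likely to creep in.
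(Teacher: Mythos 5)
Your proposal matches the paper's treatment exactly: the paper gives no proof of this theorem at all, stating only that it ``is a simplified one quoted from'' Zheng (2008), and your Step~1 defers to that same reference. The additional sketch in your Step~2 (martingale CLT plus tightness for $M_n(z)$, then the change of variables $z=z(\zeta)$ and residue bookkeeping to reach the explicit contour formulas) is a plausible outline of what Zheng's argument must contain, but the paper never carries out any of it, so there is nothing in the paper against which to verify those details.
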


We should point out that  Zheng's CLT
for $F$-matrices covers more general situations then those cited in
Theorem~\ref{T2.1}. In particular,  the fourth-moments 
$E|\xi_{11}|^4$ and $E|\eta_{11}|^4$ can be different.

The following lemma will be used in \S\ref{sec:test2} for an
application of Theorem \ref{T2.1}  to obtain the formula 
(\ref{testE})  and (\ref{testVar}).

\begin{lemma}
  For the function
  $f(x)=\log(a+bx),\quad x\in \mathbb{R}, \quad a, b>0 $, let
  $(c,d)$ be the unique solution to the equations $$\left\{
  \begin{array}{llll}
    c^2+d^2=a(1-y_2)^2+b(1+h^2),\\
    cd=bh,\\
    0 <d< c.\\
  \end{array} \right.$$
  Analogously,  let  $\gamma, \eta$ be the constants similar to $(c,
  d)$ but for the function $g(x)=\log(\alpha+\beta x), \quad \alpha
  > 0,\quad \beta > 0.$
  Then,  the mean and covariance functions in (\ref{E1}) and
  (\ref{cov1}) equal to
  \begin{eqnarray*}
    m(f)&=&\frac{1}{2}\log\frac{(c^2-d^2)h^2}{(ch-y_2d)^2},\\
    \upsilon(f, g)& =&
    2bhd^{-1}c^{-1}\log{\frac{c\gamma}{c\gamma-d\eta}}.
  \end{eqnarray*}
  \label{lem1}
\end{lemma}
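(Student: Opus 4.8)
The plan is to apply Theorem~\ref{T2.1} directly with $f(x)=\log(a+bx)$ and $g(x)=\log(\alpha+\beta x)$, so that the whole task reduces to evaluating the two contour integrals (\ref{E1}) and (\ref{cov1}) (the $\beta$-terms vanish since $\beta=E|\xi_{11}|^4-3=0$ in the Gaussian real case relevant to \S\ref{sec:test2}, and in any case are not part of what the lemma asserts). Recall the parametrization $z(\zeta)=(1-y_2)^{-2}[1+h^2+2h\mathcal{R}(\zeta)]$ with $h=\sqrt{y_1+y_2-y_1y_2}$. First I would substitute this $z(\zeta)$ into $f$: write $a+bz(\zeta) = (1-y_2)^{-2}\bigl[a(1-y_2)^2 + b(1+h^2) + bh(\zeta+\zeta^{-1})\bigr]$ on $|\zeta|=1$, where $\mathcal{R}(\zeta)=\tfrac12(\zeta+\zeta^{-1})$. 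The defining system $c^2+d^2 = a(1-y_2)^2+b(1+h^2)$, $cd=bh$ is precisely the one that factors the bracket: $a(1-y_2)^2+b(1+h^2)+bh(\zeta+\zeta^{-1}) = (c+d\zeta)(c+d\zeta^{-1}) = c^{-1}\zeta^{-1}\cdot$ wait — more cleanly, $(c+d\zeta)(c+d/\zeta) = c^2+d^2 + cd(\zeta+1/\zeta)$, so indeed $a+bz(\zeta) = (1-y_2)^{-2}(c+d\zeta)(c+d\zeta^{-1})$. Hence
\[
f(z(\zeta)) = \log(a+bz(\zeta)) = -2\log(1-y_2) + \log(c+d\zeta) + \log(c+d\zeta^{-1}).
\]
Since $0<d<c$, the function $\log(c+d\zeta)$ is analytic in $|\zeta|\le 1$ and $\log(c+d\zeta^{-1})$ is analytic in $|\zeta|\ge 1$ (including $\infty$, with value $\log c$ there).

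For the mean, I would plug this decomposition into the kernel $\tfrac{1}{\zeta-1/r}+\tfrac{1}{\zeta+1/r}-\tfrac{2}{\zeta+y_2/(hr)}$ in (\ref{E1}) and evaluate the $\oint_{|\zeta|=1}$ by residues, then let $r\to 1_+$. The constant term $-2\log(1-y_2)$ integrates against the kernel; the kernel $\tfrac{1}{\zeta-1/r}+\tfrac{1}{\zeta+1/r}-\tfrac{2}{\zeta+y_2/(hr)}$ has poles at $1/r$ (inside), $-1/r$ (outside), and $-y_2/(hr)$ (inside, since $y_2/h<1$ as $y_1,y_2\in(0,1)$); summing residues of $\tfrac{1}{4\pi i}\cdot\big[\text{const}\big]\cdot(\text{kernel})\,d\zeta$ gives $\tfrac14(1 - 2)\cdot(-2\log(1-y_2)) = \tfrac12\log(1-y_2)$ — this and the remaining pieces should telescope so that only the combination $\tfrac12\log\tfrac{(c^2-d^2)h^2}{(ch-y_2 d)^2}$ survives. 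Concretely: $\log(c+d\zeta)$ is analytic inside so contributes residues only at the kernel poles inside; $\log(c+d/\zeta)$ is analytic outside so one picks up its residue at the poles outside plus the point at infinity. Collecting: from $\log(c+d\zeta)$ at $\zeta=1/r\to1$ we get $\log(c+d)$; at $\zeta=-y_2/(hr)\to -y_2/h$ we get $-2\log(c-dy_2/h)$; from $\log(c+d/\zeta)$ the outside pole at $\zeta=-1/r$ and the residue at $\infty$ combine to give $\log(c+d) - 2\log c + \ldots$; bookkeeping the signs and the $\tfrac14$ prefactor yields $m(f) = \tfrac12\bigl[\log(c^2-d^2) - 2\log(c - d y_2/h)\bigr] + \tfrac12\log(1-y_2)$-type terms, which one simplifies to $\tfrac12\log\dfrac{(c^2-d^2)h^2}{(ch-y_2 d)^2}$ after absorbing the $(1-y_2)$ factors via $cd=bh$. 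I expect the clean cancellation of the $-2\log(1-y_2)$ to be the step that requires the most care.

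For the covariance, substitute the decompositions of $f(z(r_1\zeta_1))$ and $g(z(r_2\zeta_2))$ into
\[
\upsilon(f,g) = -\frac{1}{2\pi^2}\oint_{|\zeta_2|=1}\oint_{|\zeta_1|=1}\frac{f(z(r_1\zeta_1))\,g(z(r_2\zeta_2))\,r_1 r_2}{(r_2\zeta_2 - r_1\zeta_1)^2}\,d\zeta_1\,d\zeta_2.
\]
The constant terms $-2\log(1-y_2)$ drop out because $\oint \frac{d\zeta_1}{(r_2\zeta_2-r_1\zeta_1)^2}=0$ (no pole inside when $r_1<r_2$). For the surviving terms I would do the inner $\zeta_1$ integral first: the analytic-inside part $\log(c+d r_1\zeta_1)$ gives (by residue at the double pole $\zeta_1 = r_2\zeta_2/r_1$, which lies outside $|\zeta_1|=1$... ) — actually one orients it so the only contribution comes from differentiating the analytic factor; this produces something proportional to $\frac{d r_1}{c + d r_2 \zeta_2}$ after the dust settles, and the analytic-outside part $\log(c + d/(r_1\zeta_1))$ gives zero or a boundary term. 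Then the outer $\zeta_2$ integral of $\log(\gamma + \eta r_2 \zeta_2)\cdot\frac{1}{c+d r_2\zeta_2}$ (or its conjugate-reciprocal partner) is a single residue at $\zeta_2 = -c/(d r_2)$ (outside) or via the logarithmic branch, yielding $\log\frac{c\gamma}{c\gamma - d\eta}$; the prefactors $r_1 r_2$, the $-\tfrac1{2\pi^2}$, the two residue $2\pi i$ factors, and the Jacobian-type factor $bh = cd$ combine to the stated $2bh\, d^{-1}c^{-1}$. I expect the main obstacle to be getting the orientation/pole-location bookkeeping right in the double contour integral (which logarithmic piece is analytic inside vs.\ outside each contour, and which residues are actually enclosed given $1<r_1<r_2\downarrow 1$), together with checking that all spurious $\log(1-y_2)$ and $\log c$, $\log\gamma$ constants cancel so that only the asserted closed forms remain. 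The $\beta$-terms (\ref{E1betax}), (\ref{E1betay}), (\ref{cov1betax}) need a parallel but easier residue computation if one wants the non-Gaussian version, but they are not claimed in Lemma~\ref{lem1} and I would omit them.
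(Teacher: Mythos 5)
Your overall strategy is the paper's: the defining system for $(c,d)$ is used exactly to factor $a+bz(\zeta)=(1-y_2)^{-2}(c+d\zeta)(c+d\zeta^{-1})$ on $|\zeta|=1$, after which (\ref{E1}) and (\ref{cov1}) are evaluated by residues. (The paper handles the non-analyticity of $\log|c+d\zeta|^2$ by symmetrizing $\theta\mapsto 2\pi-\theta$ and passing to the real part of $\widetilde f(z(\zeta))=\log\bigl((c+d\zeta)^2/(1-y_2)^2\bigr)$; your split into the inside-analytic $\log(c+d\zeta)$ and the outside-analytic $\log(c+d\zeta^{-1})$ is an equivalent device.) However, the residue bookkeeping you sketch would not produce the stated formulas as written. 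For the mean: since $r\downarrow 1$ from above, $|\pm 1/r|=1/r<1$, so \emph{both} $1/r$ and $-1/r$ lie inside the unit circle, as does $-y_2/(hr)$. Consequently the constant $-2\log(1-y_2)$ contributes $\tfrac14(1+1-2)\cdot(-2\log(1-y_2))=0$ --- there is nothing left to ``absorb via $cd=bh$'' --- and the inside-analytic piece $\log(c+d\zeta)$ also picks up a residue at $-1/r$, supplying the $\log(c-d)$ needed to form $\log(c^2-d^2)$; your tally omits it. The outside-analytic piece contributes nothing (deform outward: the kernel is $O(\zeta^{-2})$ at infinity because its residue weights sum to zero). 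With these corrections the computation closes to $\tfrac12\bigl[\log(c^2-d^2)-2\log(c-dy_2/h)\bigr]$, which is the stated $m(f)$.

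For the covariance your attribution is reversed: since $1<r_1<r_2$, the double pole $\zeta_1=r_2\zeta_2/r_1$ lies \emph{outside} $|\zeta_1|=1$, so the kernel is analytic inside and the inside-analytic factor $\log(c+dr_1\zeta_1)$ (and the constant) integrates to zero; it is the outside-analytic factor $\log(c+d/(r_1\zeta_1))$, which has a branch cut inside the disk, that survives --- evaluated by deforming outward and differentiating at the exterior double pole --- yielding the paper's inner-integral value $2\pi i\bigl(\tfrac{bh}{cd\,\zeta_2}-\tfrac{bh r_2/d}{d+cr_2\zeta_2}\bigr)$. (The paper reaches the same point by first integrating by parts to turn the double pole into a simple pole acting on the derivative of the logarithm.) The outer $\zeta_2$-integral then proceeds as you describe and the constants again cancel by the same weight count. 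So: same method as the paper and the correct key identity, but the inside/outside pole assignments must be fixed before the ``expected telescoping'' actually occurs; as written, the mean would come out with a spurious $\tfrac12\log(1-y_2)$ and a missing $\log(c-d)$.
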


\section{Testing the hypothesis that a high-dimensional covariance
  matrix is equal to a given  matrix } \label{sec:test1}

To test the hypothesis   $H_0 : ~\Sigma_p  =I_p$, let be   the
sample covariance matrix \textbf{S}  and likelihood ratio statistic
$T_n$  as  defined in (\ref{S}) and (\ref{singlVclassict}),
respectively.  For $\xi_i=\textbf{x}_i-\mu_p,$ the array
$\{\xi_i\}_{i=1, \cdots, n}$ contains  $p$-dimensional standard
normal variables under $H_0$. Let
$$
\textbf{S}_n={1\over{n}}\sum\limits_{i=1}^{n}\xi_i\xi_i^*.
$$
and  $$\widetilde{L^*}=\mbox{tr} \textbf{S}_n -
\log|\textbf{S}_n|-p.$$

\begin{theorem}
  Assuming that the conditions of Theorem \ref{T1.1} hold, $L^*$ is
  defined as (\ref{L*})
  and $g(x)= x-\log x-1$.  Then,  under $H_0$ and when $n\rightarrow\infty$
  \begin{equation}
    \widetilde{T_n}=\upsilon(g)^{-\frac{1}{2}}\left[ L^*-p \cdot
      F^{y_n}(g)- m(g)\right] \Rightarrow N \left( 0,
    1\right),
    \label{singlsta}
  \end{equation}
  where $F^{y_n}$ is the Mar\v{c}enko-Pastur law of  index  $ y_n$.
\end{theorem}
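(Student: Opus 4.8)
The plan is to recognize $L^*$ as a linear spectral statistic of the sample covariance matrix $\mathbf{S}_n$ and then apply Theorem \ref{T1.1}. First I would note that under $H_0$, after centering by $\mu_p$, the vectors $\xi_i = \mathbf{x}_i - \mu_p$ are i.i.d.\ standard Gaussian, so $\mathbf{S}_n = \frac1n\sum_i \xi_i\xi_i^*$ has eigenvalues whose ESD $F_n$ is close to the Mar\v{c}enko--Pastur law $F^{y_n}$. Writing $g(x) = x - \log x - 1$, we have $\widetilde{L^*} = \mathrm{tr}\,\mathbf{S}_n - \log|\mathbf{S}_n| - p = \sum_{i=1}^p g(\lambda_i^{\mathbf{S}_n}) = p\int g(x)\,dF_n(x)$. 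Hence, in the notation of \eqref{Gdef}, $\widetilde{L^*} = G_n(g) + p\int g(x)\,dF^{y_n}(x) = G_n(g) + p\cdot F^{y_n}(g)$, provided $g \in \mathcal{A}$, i.e.\ $g$ is analytic on an open neighborhood $\mathcal{U}$ of $[I_{(0,1)}(y)a(y), b(y)]$. Since $y \in (0,1)$, the left endpoint $a(y) = (1-\sqrt{y})^2 > 0$, so the support stays strictly away from $0$ and $g(x) = x - \log x - 1$ is indeed analytic there; this is why the identity-covariance case requires $y<1$.

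Next I would apply part (i) of Theorem \ref{T1.1} (real Gaussian case, so $E\xi_{11}^4 = 3$) with $k=1$ and $f_1 = g$: this gives $G_n(g) \Rightarrow N(m(g), \upsilon(g))$, where $m(g)$ and $\upsilon(g,g) =: \upsilon(g)$ are the mean and variance functionals defined in \eqref{04mean}--\eqref{04var}. Combining with the decomposition above, $\widetilde{L^*} - p\cdot F^{y_n}(g) = G_n(g) \Rightarrow N(m(g),\upsilon(g))$, so standardizing yields $\upsilon(g)^{-1/2}[\widetilde{L^*} - p\cdot F^{y_n}(g) - m(g)] \Rightarrow N(0,1)$.

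The remaining point is to replace $\widetilde{L^*}$ by $L^*$, i.e.\ to pass from the non-centered sample covariance matrix $\mathbf{S}_n$ (built from $\xi_i$) to $\mathbf{S}$ (built from $\mathbf{x}_i - \overline{\mathbf{x}}$). This is a standard rank-one correction argument: $\mathbf{S} = \mathbf{S}_n - (\overline{\mathbf{x}} - \mu_p)(\overline{\mathbf{x}} - \mu_p)^*$ up to the usual $n/(n-1)$-type bookkeeping, so the two matrices differ by a rank-one perturbation of vanishing norm, and one checks that $L^* - \widetilde{L^*} \to 0$ in probability; the eigenvalue interlacing for $g$ restricted near the MP support, together with $\mathrm{tr}\,\mathbf{S} - \mathrm{tr}\,\mathbf{S}_n$ and $\log|\mathbf{S}| - \log|\mathbf{S}_n|$ both being negligible, gives this. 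Then Slutsky's theorem finishes the proof, since $\upsilon(g)$ is a fixed positive constant depending only on $y$ (and $\upsilon(g) > 0$ must be verified, which follows from the explicit contour-integral formula for $\log$-type functions, or equivalently from Lemma-type computations as in Lemma \ref{lem1}). I expect the main obstacle to be precisely the justification that $G_n(g)$ and hence the limit law is genuinely non-degenerate and that the centering term $p\cdot F^{y_n}(g)$ is the correct finite-$n$ centering (not $p\cdot F^{y}(g)$) — Theorem \ref{T1.1} is stated with the $F^{y_n}$ centering, so this is built in, but one should be careful that $\sqrt{p}\,(F^{y_n}(g) - F^{y}(g))$ need not vanish, which is exactly why the theorem is phrased with $F^{y_n}$ rather than the limiting $F^y$.
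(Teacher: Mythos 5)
Your proposal follows essentially the same route as the paper: write $\widetilde{L^*}=p\int g\,dF_n$ with $g(x)=x-\log x-1$, identify $G_n(g)=\widetilde{L^*}-p\cdot F^{y_n}(g)$, invoke Theorem \ref{T1.1} for the Gaussian real case, and then pass from $\widetilde{L^*}$ to $L^*$ via the rank-one difference between $\mathbf{S}$ and $\mathbf{S}_n$. If anything, your treatment of that last step is more careful than the paper's (which only asserts that the two matrices share the same LSD and hence the statistics share the same asymptotic distribution), since you correctly identify that what is actually needed is $L^*-\widetilde{L^*}\to 0$ in probability, not merely agreement of the limiting spectral distributions.
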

\begin{proof}
  Because the difference between   $\textbf{S}$
  and $ \textbf{S}_n$  is a rank-1 matrix,
  $\textbf{S}$ and $\textbf{S}_n$ have the same LSD. So, $L^*$
  and $\widetilde{L^*}$ have the same asymptotic
  distribution. We also have
  \begin{eqnarray*}
    \widetilde{L^*}&=& \mbox{tr}\textbf{S}_n - \log|\textbf{S}_n|-p\\
    &=& \sum\limits_{i=1}^{p} \left(\lambda_i^{\textbf{s}_n}-\log
    \lambda_i^{\textbf{s}_n}-1\right) = p \cdot \int (x- \log x-1)
    dF_n(x)\\
    &=&p \cdot \int g(x) d\left(F_n(x)-F^{y_n}(x)\right) +p \cdot
    F^{y_n}(g),
  \end{eqnarray*}
  so that
  \begin{equation}
    G_n(g)=\widetilde{L^*}-p \cdot F^{y_n}(g).\label{singlESD-pLSD}
  \end{equation}
  By Theorem \ref{T1.1}, $G_n(g)$  weakly converges  to a Gaussian
  vector  with the mean
  \begin{equation}
    m(g)=-\frac{\log{(1-y)}}{2}\label{singlmean}
  \end{equation}
  and variance
  \begin{equation}
    \upsilon(g)=-2\displaystyle\log{(1-y)}-2y.\label{singlvar}
  \end{equation}
  for the real case, which are calculated in  \S\ref{sec:proofs}. For
  the complex case, the mean $m(g)$ is zero and the variance is half
  of $ \upsilon(g). $ Then, by (\ref{singlESD-pLSD}) we arrive at
  \begin{equation}
    \widetilde{L^*}-p\cdot F^{y_n}(g)~\Rightarrow~ N\left(m(g),
    \upsilon(g)\right),\label{1connec}
  \end{equation}
  where
  \begin{equation}F^{y_n}(g)=1-\frac{y_n-1}{y_n}\log{(1-y_n)}\label{singllimit}
  \end{equation}
  can be calculated by the density of LSD of sample covariance matrix
  in \S\ref{sec:proofs}. Because $\widetilde{L^*}$ and $ L^*$ have the
  same asymptotic distribution and (\ref{1connec}), finally we get
  \begin{eqnarray}
    \widetilde{T_n}=\upsilon(g)^{-\frac{1}{2}}\left[L^*-p \cdot
      F^{y_n}(g)- m(g)\right] \Rightarrow N \left( 0, 1\right).\nonumber
  \end{eqnarray}
\end{proof}

\subsection{Simulation study I}
\label{sec:simul1} For different values of $(p, n)$, we compute the
realized sizes of traditional likelihood ratio test (LRT) and the
corrected likelihood ratio test (CLRT) proposed previously. The
nominal test level  is set to be $\alpha= 0.05$, and for each $(p,
n)$, we run  10,000 independent replications with real Gaussian
variables. Results are given in Table~1 and Figure~1 below.

% Table 1
\begin{table}[hp]
  \begin{center}
    \begin{tabular}{|l|ccc|cc|}
      \hline & \multicolumn{3}{c|}{CLRT }&  \multicolumn{2}{c|}{LRT } \\[1mm]
             {(p, $n$ ) } & { Size} &Difference with 5\% & {Power} & {Size}& { Power}
             \\[1mm]\hline
             (5, 500) & 0.0803&0.0303&0.6013 &0.0521&0.5233 \\[2mm]
             (10, 500) & 0.0690&0.0190&0.9517 &0.0555&0.9417  \\[2mm]
             (50, 500) & 0.0594&0.0094&1 &0.2252&1  \\[2mm]
             (100, 500) & 0.0537&0.0037&1 &0.9757&1  \\[2mm]
             (300, 500)  & 0.0515&0.0015&1  &1&1  \\[2mm]
             \hline
    \end{tabular}
  \end{center}
  \caption{Sizes and powers of the traditional LRT and the corrected
    LRT, based on 10,000 independent applications with real Gaussian
    variables. Powers are estimated under the alternative
    $\Sigma_p = \mathop{\mathrm{diag}}(1, 0.05, 0.05, 0.05,\ldots)$.}
\end{table}

% Figure 1
\begin{figure}[hp]
  \begin{center}
    \includegraphics[width=9cm]{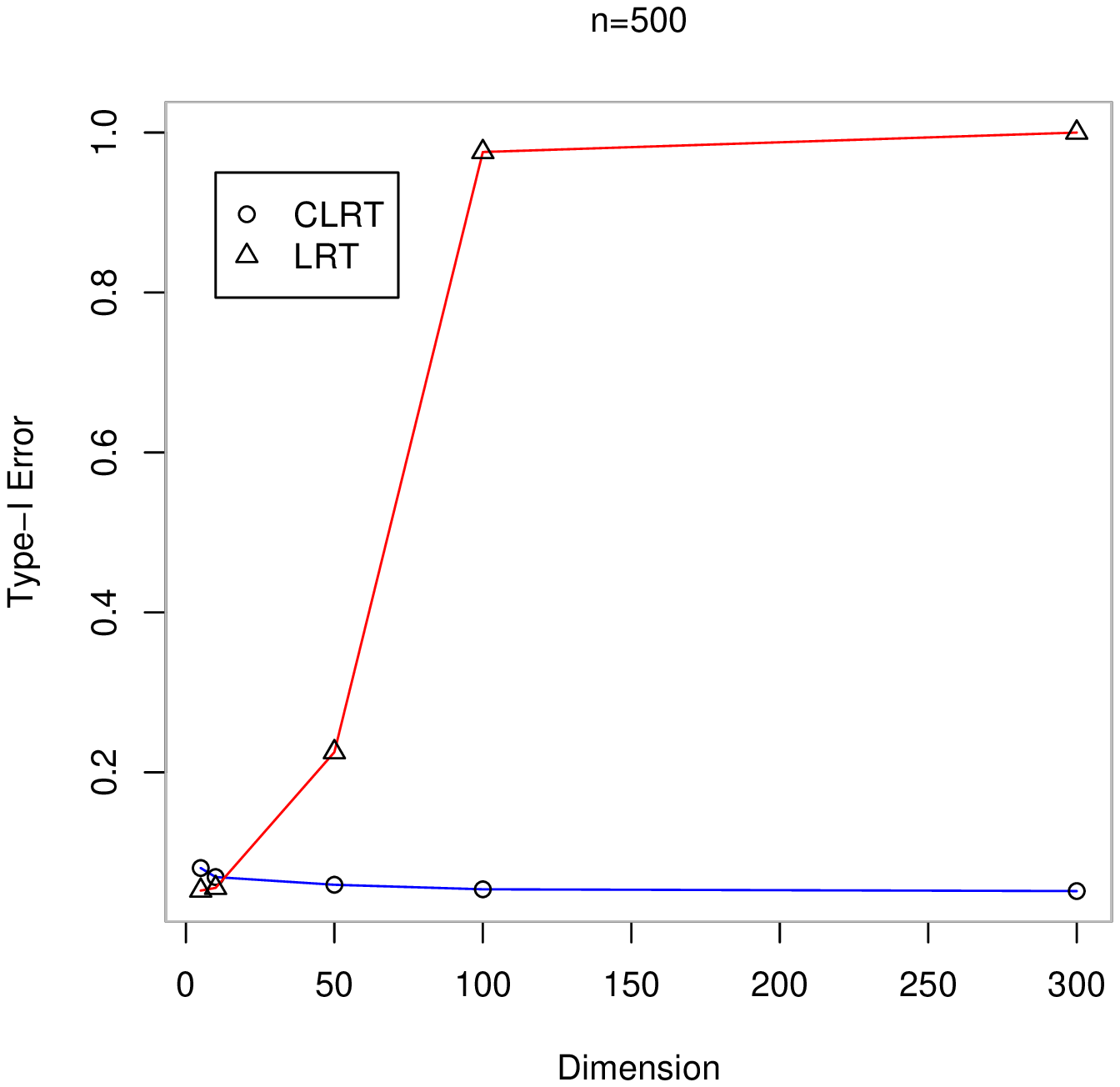}\\
    \caption{Realized sizes of the traditional LRT and the corrected
      LRT for different dimensions $p$ with real Gaussian variables.
      10 000 independent runs with 5\%  nominal level and sample size
      $n=500$.\label{1}}
  \end{center}
\end{figure}

As seen from  Table~1, the traditional LRT always rejects $H_0$ when
$p$ is large, like  $p=100$ or 300, while the sizes produced by the
corrected LRT perfectly matches  the nominal level. For moderate
dimensions like $p=50$,  the corrected LRT still performs correctly
while the traditional  LRT has a size much higher than 5\%.

\section{Testing the equality of two high-dimensional covariance matrices }
\label{sec:test2}

Let $ ({\bf x}_i), ~  i=1, \cdots, n_1 $ and $ ({\bf y}_j) , ~  j=1,
\cdots, n_2$ be observations from two normal populations
$N(\mu_k,\Sigma_k),~ k=1,2$, respectively. We examine the test
defined in (\ref{a0}) and (\ref{L1AB}).  The aim is to find a good
scaling of the LR statistic $T_N$, such that the scaled statistic
weakly converges to some limiting distribution. Let
$$
\xi_i= \Sigma^{-\frac{1}2} (\textbf{x}_i-\mu_1),\quad
\eta_i=\Sigma^{-\frac{1}2} (\textbf{y}_i-\mu_2)
$$
where $\Sigma=\Sigma_1=\Sigma_2$ denotes the common covariance
matrix under $H_0$. Note that in a strict sense, the vectors
$(\textbf{x}_i), (\textbf{y}_i)$ and the matrices $\Sigma, \Sigma_1,
\Sigma_2$ depend on $p$. However we do not  signify this dependence
in  notations for ease of statements. Due to Gaussian assumption,
the arrays $(\xi_i)_{i=1, \cdots, n_1}$  and  $(\eta_j)_{j=1,
  \cdots, n_2}$ contain i.i.d. $N(0, 1)$ variables, for which we can
apply Theorem \ref{T2.1}.

Let
\begin{eqnarray*}
  S_{1}&=&{1\over{n_1}}\sum\limits_{i=1}^{n_1}\xi_i\xi_i^*
  =\Sigma^{-\frac{1}2} C \Sigma^{-\frac{1}2} \\
  S_{2}&=&
  {1\over{n_2}}\sum\limits_{j=1}^{n_2}\eta_j\eta_j^*=\Sigma^{-\frac{1}2}
  D \Sigma^{-\frac{1}2},
\end{eqnarray*}
where
\begin{eqnarray}
  C&=&\frac{1}{n_1}\sum\limits_{i=1}^{n_1}({\bf
    x}_{i}-\mu_1)({\bf x}_{i}-\mu_1)^{*}, \nonumber\\
  D&=&\frac{1}{n_2}\sum\limits_{j=1}^{n_2}({\bf y}_{j}-\mu_2)({\bf
    y}_{j}-\mu_2)^{*}.\nonumber
\end{eqnarray}
Note that
\begin{equation}
  V_n=S_{1}S_{2}^{-1}\nonumber
\end{equation}
forms a random F-matrix and we have
\begin{equation}
  \widetilde{L_1}=\frac{\left|S_{1}\right|^{\frac{n_1}{2}}\cdot
    \left|S_{2}\right|^{\frac{n_2}{2}}}
            {\left|c_1S_{1}+c_2S_{2}\right|^{\frac{N}{2}}}=\frac{\left|C\right|^{\frac{n_1}{2}}\cdot
              \left|D\right|^{\frac{n_2}{2}}}
            {\left|c_1C+c_2D\right|^{\frac{N}{2}}}.\label{S1S2CD}
\end{equation}

\begin{theorem} \label{T4.1}
  Assuming that  the conditions of Theorem \ref{T2.1} hold under $H_0$, $L_1$ as
  defined in (\ref{L1AB}) and
  \[
  f(x)=\log(y_{n_1}+y_{n_2}x)-\frac{y_{n_2}}{y_{n_1}
    +y_{n_2}}\log x-\log(y_{n_1}+y_{n_2}).
  \]
  Then, under $H_0$ and as $n_1\wedge n_2\rightarrow \infty$,
  \begin{equation}
    \widetilde{T_N}=\upsilon(f)^{-\frac{1}{2}}\left[
      -\displaystyle\frac{2\log L_1}{N}-p \cdot F_{y_{n_1},y_{n_2}}(f)-
      m(f)\right] \Rightarrow N \left( 0, 1\right).\label{LST}
  \end{equation}
\end{theorem}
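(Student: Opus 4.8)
The plan is to recast $-\frac{2}{N}\log L_1$ as a linear spectral statistic of the $F$-matrix $V_n=S_1S_2^{-1}$, apply Theorem~\ref{T2.1}, and compute the resulting mean and covariance with Lemma~\ref{lem1}. First, exactly as in the one-sample case of \S\ref{sec:test1}, $A$ differs from $C$ and $B$ from $D$ by a rank-one matrix, so $-\frac{2}{N}\log L_1$ and $-\frac{2}{N}\log\widetilde{L_1}$ have the same limiting distribution; by~(\ref{S1S2CD}) it is therefore enough to analyze $-\frac{2}{N}\log\widetilde{L_1}$, which is a function of the eigenvalues $\lambda_1,\dots,\lambda_p$ of $V_n$ only (since $S_1S_2^{-1}=\Sigma^{-1/2}CD^{-1}\Sigma^{1/2}$ is similar to $CD^{-1}$). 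Writing $c_1C+c_2D=D^{1/2}\bigl(c_1D^{-1/2}CD^{-1/2}+c_2I\bigr)D^{1/2}$ and using $c_1+c_2=1$, a short determinant computation gives
\[
-\frac{2}{N}\log\widetilde{L_1}=\sum_{i=1}^{p}\bigl[\log(c_1\lambda_i+c_2)-c_1\log\lambda_i\bigr].
\]
Since $N=n_1+n_2$ together with $y_{n_k}=p/n_k$ forces $c_1=y_{n_2}/(y_{n_1}+y_{n_2})$ and $c_2=y_{n_1}/(y_{n_1}+y_{n_2})$, the summand is precisely $f(\lambda_i)$ for the $f$ of the statement, whence
\[
-\frac{2}{N}\log\widetilde{L_1}=p\int f\,dF_n^{V_n}=\widetilde{G_n}(f)+p\,F_{y_{n_1},y_{n_2}}(f).
\]

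Because $y_1,y_2\in(0,1)$ the left endpoint $a=(1-y_2)^{-2}\bigl(1-\sqrt{y_1+y_2-y_1y_2}\bigr)^2$ of the support of $F_{y_1,y_2}$ is strictly positive, and $f$ is analytic except at $0$ and $-y_{n_1}/y_{n_2}$; hence $f\in\widetilde{\mathcal{A}}$ and Theorem~\ref{T2.1} gives $\widetilde{G_n}(f)\Rightarrow N\bigl(m(f),\upsilon(f)\bigr)$. Under the Gaussian assumption $\beta=E|\xi_{11}|^4-3=0$ in the real case and $E|\xi_{11}|^4-2=0$ in the complex case, so the $\beta$-terms (\ref{E1betax}), (\ref{E1betay}), (\ref{cov1betax}) drop out and only (\ref{E1}), (\ref{cov1}) survive, with a zero mean and a halved covariance in the complex case.

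It remains to evaluate $m(f)$ and $\upsilon(f)$. I would split $f(x)=\log(y_{n_1}+y_{n_2}x)-\frac{y_{n_2}}{y_{n_1}+y_{n_2}}\log x-\log(y_{n_1}+y_{n_2})$: the constant term contributes nothing, because the contour integrals in (\ref{E1}) and (\ref{cov1}) vanish for a constant integrand; the term $\log(y_{n_1}+y_{n_2}x)$ is handled by Lemma~\ref{lem1} with $(a,b)=(y_{n_1},y_{n_2})$; and $\log x$ is obtained as the $a\downarrow0$ limit in Lemma~\ref{lem1} with $b=1$ (equivalently by a direct contour-integral evaluation). Using the bilinearity of $\upsilon$ and collecting terms yields closed-form expressions for $m(f)$ and $\upsilon(f)$; substituting these into $\widetilde{G_n}(f)\Rightarrow N(m(f),\upsilon(f))$ and combining with the identity above gives~(\ref{LST}).

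I expect the main obstacle to be the exact algebraic bookkeeping of the first step — recognizing that the combination of determinants defining $L_1$ collapses to the single function $f$ once $c_1,c_2$ are written through $y_{n_1},y_{n_2}$. Two further technical points require attention: the rank-one reduction is legitimate only because the smallest eigenvalue of $V_n$ stays a.s. bounded away from $0$, so that the logarithmic singularity is never felt, and this follows from $a>0$; and $f$ depends on $n$ through $y_{n_1},y_{n_2}$, so one keeps the centering $p\,F_{y_{n_1},y_{n_2}}(f)$ exact while substituting the limits $y_1,y_2$ in $m(f)$ and $\upsilon(f)$, the resulting perturbation of $\widetilde{G_n}(f)$ being asymptotically negligible.
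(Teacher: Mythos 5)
Your proposal is correct and takes essentially the same route as the paper's own proof: the same rank-one reduction from $(A,B)$ to $(C,D)$, the same determinant manipulation showing $-\frac{2}{N}\log\widetilde{L_1}=p\int f\,dF_n^{V_n}$ with the same $f$ via $c_1=y_{n_2}/(y_{n_1}+y_{n_2})$, an application of Theorem~\ref{T2.1} with the $\beta$-terms vanishing in the Gaussian case, and evaluation of $m(f)$ and $\upsilon(f)$ through Lemma~\ref{lem1} applied to $\log(y_{n_1}+y_{n_2}x)$ and $\log x$ separately. The technical points you flag explicitly (positivity of the left edge of the support, and the negligibility of replacing $y_{n_k}$ by $y_k$ in the asymptotic mean and variance while keeping the centering $p\,F_{y_{n_1},y_{n_2}}(f)$ exact) are handled only implicitly in the paper.
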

\begin{proof}
  As  $A-C$  and $ B-D$  are rank-1 random matrices, $AB^{-1}$  and
  $CD^{-1}$  have the same LSD. Also by (\ref{S1S2CD}),
  $\widetilde{L_1}$ and $ L_1$  have the same asymptotic distribution.
  Because
  \begin{eqnarray*}
    -\frac{2}{N}\log\widetilde{L_1}
    &=&-\frac{2}{N}
    \log\left(
    \frac{\left|S_{1}\right|^{\frac{n_1}{2}}\cdot
      \left|S_{2}\right|^{\frac{n_2}{2}}}
         {\left|c_1S_{1}+c_2S_{2}\right|^{\frac{N}{2}}}
         \right)
         \\
         &=&\log{|c_1V_n^{-1}+c_2|}- c_1\cdot \log|V_n^{-1}|\\
         &=&\sum\limits_{i=1}^p \log(c_1\lambda^{V_n}_i+c_2)-c_1\cdot
         \log(\lambda^{V_n}_i)\\
         &=& p \cdot \int\left[ \log(c_1x+c_2)-c_1\cdot \log(x)
           \right]dF_n^{V_n}(x).
  \end{eqnarray*}
  Define $f(x)=\log(c_1x+c_2)-c_1\cdot \log(x)$, by
  $c_1=\frac{n_1}{N}=\frac{y_{n_2}}{y_{n_1}+y_{n_2}}$ and
  $c_2=\frac{n_2}{N}=\frac{y_{n_1}}{y_{n_1}+y_{n_2}}$, also it can be
  written as
  \begin{equation} f(x)=\log(y_{n_1}+y_{n_2}x)-\frac{y_{n_2}}{y_{n_1}
      +y_{n_2}}\log x-\log(y_{n_1}+y_{n_2}).\label{f(x)}
  \end{equation}
  From
  \begin{eqnarray}
    -\frac{2\log \widetilde{L_1}}{N}&=&p \cdot \int f(x)dF_n^{V_n}(x)\nonumber\\
    &=& p \cdot \int f(x) d\left(F_n^{V_n}(x)-F_{y_{n_1},
      y_{n_2}}(x)\right) +p \cdot F_{y_{n_1}, y_{n_2}}(f),\nonumber
  \end{eqnarray}
  we get
  \begin{equation}
    \widetilde{G_n}(f)=-\frac{2\log \widetilde{L_1}}{N}
    -p \cdot F_{y_{n_1}, y_{n_2}}(f).\label{ESD-pLSD}
  \end{equation}

  By Theorem \ref{T2.1}, $ \widetilde{G_n}(f)$  weakly converges to a
  Gaussian vector with mean
  \begin{equation}
    m(f)=
    \frac{1}{2}\left[\log\left(\frac{y_1+y_2-y_1y_2}{y_1+y_2}\right)
      -\frac{y_1}{y_1+y_2}\log(1-y_2)-\frac{y_2}{y_1+y_2}\log(1-y_1)\right]\label{testE}
  \end{equation}
  and variance
  \begin{equation}
    \upsilon(f)=-\frac{2y_2^2}{(y_1+y_2)^2}\log(1-y_1)-\frac{2y_1^2}{(y_1+y_2)^2}\log(1-y_2)
    -2\log\frac{y_1+y_2}{y_1+y_2-y_1y_2}\label{testVar}
  \end{equation}
  for the real case, which are  calculated by Lemma \ref{lem1} in
  \S\ref{sec:proofs}. For the complex case, the mean $ m(f) $ is zero
  and the variance is half of $ \upsilon(f) $. In other words,
  \begin{eqnarray}
    -\frac{2\log \widetilde{L_1}}{N}-p\cdot F_{y_{n_1}, y_{n_2}}(f)
    &\Rightarrow & N\left(m(f), \upsilon(f)\right), \label{2connec}
  \end{eqnarray}
  where
  \begin{eqnarray*}
    \displaystyle{F_{y_{n_1}, y_{n_2}}(f)} &=&
    \displaystyle{\frac{-(y_{n_1}+y_{n_2}-y_{n_1}y_{n_2})}{y_{n_1}y_{n_2}}\log{(y_{n_1}+y_{n_2}-y_{n_1}y_{n_2})}}\\[5mm]
    \quad
    &+&\displaystyle{\frac{(y_{n_1}+y_{n_2}-y_{n_1}y_{n_2})}{y_{n_1}y_{n_2}}\log{(y_{n_1}+y_{n_2})}}
    +\displaystyle{\frac{y_{n_1}(1-y_{n_2})}{y_{n_2}(y_{n_1}+y_{n_2})}\log{(1-y_{n_2})}}\nonumber\\
    \quad\quad\quad&+&\displaystyle{\frac{y_{n_2}(1-y_{n_1})}{y_{n_1}(y_{n_1}+y_{n_2})}\log{(1-y_{n_1})}},\nonumber
    \label{limit}
  \end{eqnarray*}
  is derived by use of the density of $F_{y_{n_1}, y_{n_2}}$ in
  \S\ref{sec:proofs}. Because $\widetilde{L_1}$and $ L_1$ have the
  same asymptotic distribution and by (\ref{2connec}),  we get by letting
  $n_1\wedge n_2\rightarrow\infty$,
  $$
  \widetilde{T_N}=\upsilon(f)^{-\frac{1}{2}}\left[
    -\displaystyle\frac{2\log L_1}{N}-p \cdot  F_{y_{n_1}, y_{n_2}}(f)-
    m(f)\right] \Rightarrow N \left( 0, 1\right).
  $$
\end{proof}

\subsection{Simulation study II}
\label{sec:simul2}

For different values of $(p, n_1,n_2)$, we compute the realized
sizes of the traditional LRT and the corrected LRT   with 10,000
independent replications. The nominal  test level is $\alpha=  0.05$
and we use real Gaussian variables. Results are summarized in Table
2  and Figure 2.

% Tables 2,3
\begin{table}
  \begin{center}
    \begin{tabular}{|l|ccc|cc|}
      \hline \multicolumn{6}{|c|}{(y1, y2)=(0.05, 0.05)~~}\\
      \hline & \multicolumn{3}{c|}{CLRT  }&  \multicolumn{2}{c|}{LRT } \\[1mm]
             {(p, $n_1$,  $n_2$ ) } & { Size} &Difference with 5\% &
             {Power} & {Size}& { Power}
             \\[1mm]\hline
             (5, 100, 100)  & 0.0770&0.0270&1  &0.0582&1  \\[2mm]
             (10, 200, 200) & 0.0680&0.0180&1 &0.0684&1  \\[2mm]
             (20, 400, 400) & 0.0593&0.0093&1 &0.0872&1  \\[2mm]
             (40, 800, 800) & 0.0526&0.0026&1 &0.1339&1  \\[2mm]
             (80, 1600, 1600)   &0.0501 &0.0001&1&0.2687&1  \\[2mm]
             (160, 3200, 3200)   &0.0491 &-0.0009&1&0.6488&1  \\[2mm]
             (320, 6400, 6400)   &0.0447 &-0.0053&0.9671&1&1  \\[2mm]
             \hline
    \end{tabular}\\[2mm]
    \begin{tabular}{|l|ccc|cc|}
      \hline \multicolumn{6}{|c|}{(y1, y2)=(0.05, 0.1)~~}\\
      \hline & \multicolumn{3}{c|}{CLRT  }&  \multicolumn{2}{c|}{LRT } \\[1mm]
             {(p, $n_1$,  $n_2$ ) } & { Size} &Difference with 5\% &
             {Power} & {Size}& { Power}
             \\[1mm]\hline
             (5, 100, 50)   & 0.0781&0.0281&0.9925 &0.0640&0.9849  \\[2mm]
             (10, 200, 100) & 0.0617&0.0117&0.9847 &0.0752&0.9904  \\[2mm]
             (20, 400, 200) & 0.0573&0.0073&0.9775 &0.1104&0.9938  \\[2mm]
             (40, 800, 400) & 0.0561&0.0061&0.9765 &0.2115&0.9975  \\[2mm]
             (80, 1600, 800)&0.0521 &0.0021&0.9702 &0.4954&0.9998 \\[2mm]
             (160, 3200, 1600)   &0.0520&0.0020&0.9702&0.9433&1  \\[2mm]
             (320, 6400, 3200)   &0.0510 &0.0010&1&0.9939&1  \\[2mm]
             \hline
    \end{tabular}
  \end{center}
  \caption{Sizes and powers of the traditional LRT  and the corrected
    LRT  based on 10,000 independent replications using real
    Gaussian variables.  Powers are estimated under the
    alternative  $\Sigma_1\Sigma_2^{-1}= \mbox{ diag} (3,1,1,1,\cdots)$.
    Upper:  $y_1=y_2=0.05$.~~
    Bottom:   $y_1=0.05,~ y_2=0.1$.\label{tab:23}}
\end{table}

% Figure 2
\begin{figure}[hb]
  \begin{center}
    \includegraphics[width=7cm]{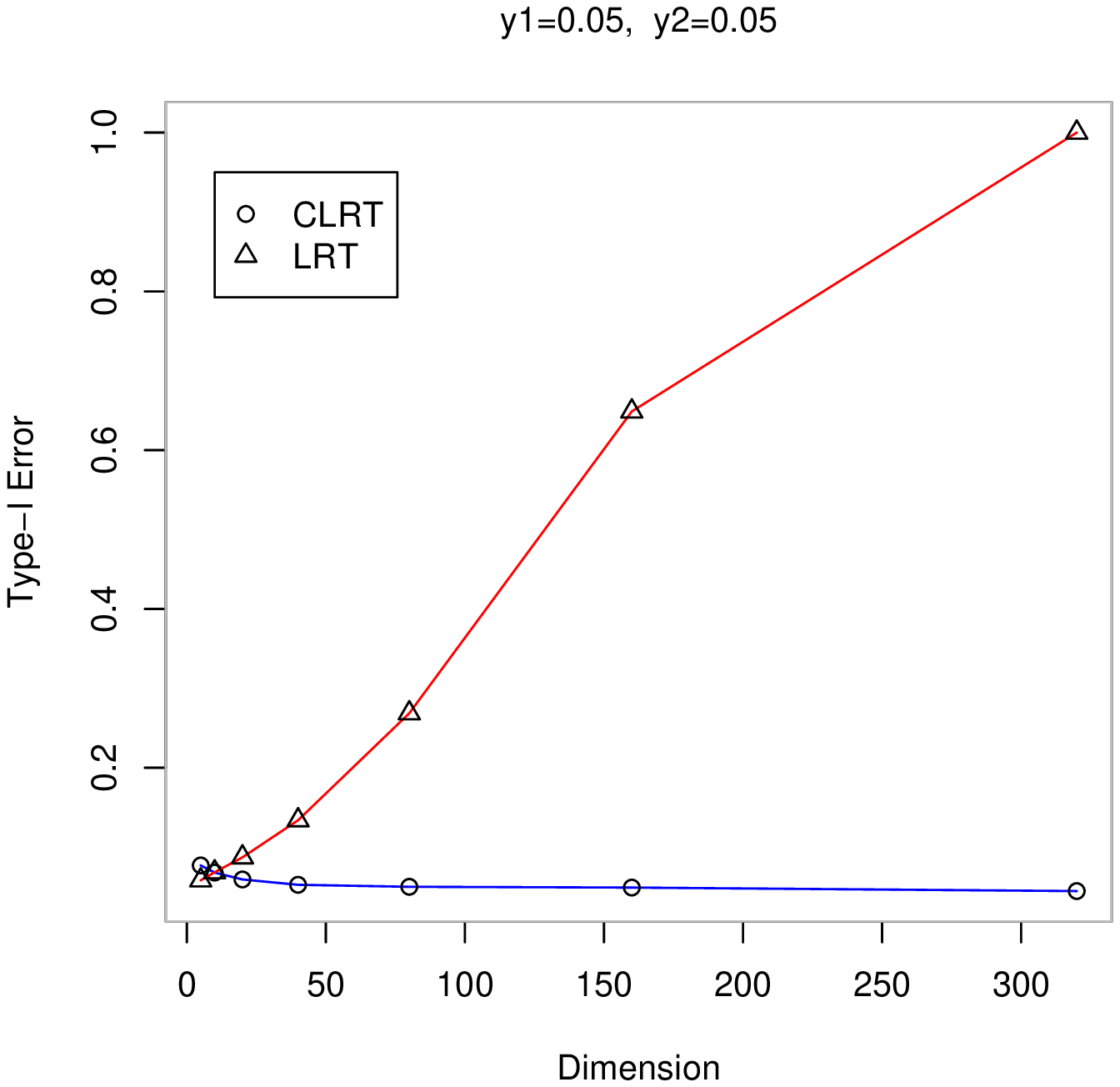}\quad
    \includegraphics[width=7cm]{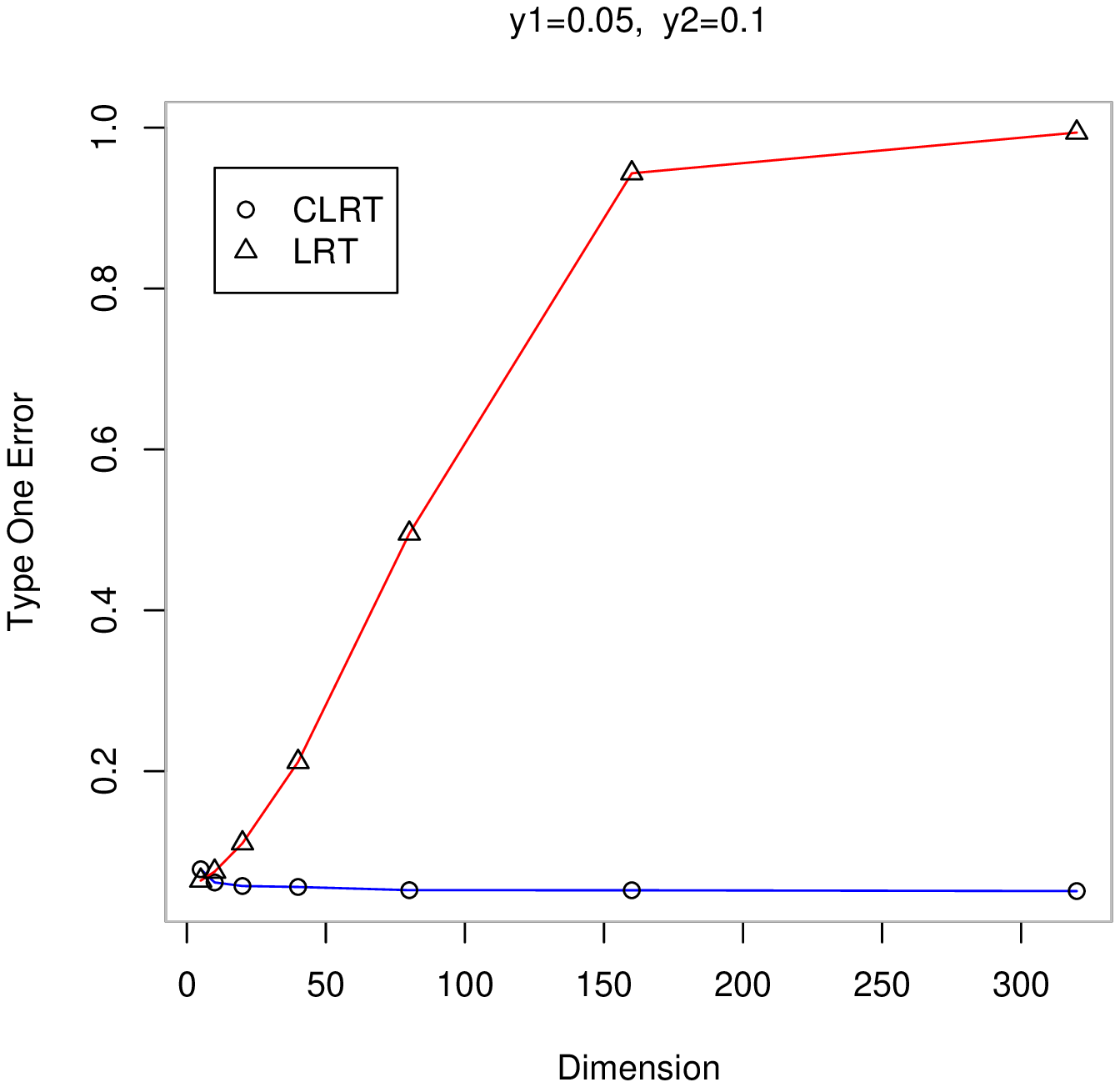}
    \caption{Sizes of  the traditional LRT  and the corrected
      LRT  based on 10,000 independent replications using real
      Gaussian variables.
      Left:  $y_1=y_2=0.05$.~~
      Right:   $y_1=0.05,~ y_2=0.1$.\label{fig:2}}
  \end{center}
\end{figure}

As we can see, when the dimension $p$ increases,   the traditional
LRT leads to a dramatically high test size while the corrected LRT
remains accurate. Furthermore,  for  moderate dimensions like $p=20$
or 40, the sizes of  the traditional LRT are much higher than 5\%,
whereas the ones of corrected LRT  are  very close. By a closer look
at the  column showing the difference with  5\%, we note that  this
difference rapidly decrease as $p$ increases for the corrected test.
Figure~2 gives  a vivid sight of these  comparisons between the
traditional LRT and the corrected LRT  in term of test  sizes.

\subsection{A pseudo-likelihood test for high-dimensional non-Gaussian data}

As said in Introduction, previous related works as
\citet{LedoitWolf02}, \citet{Sriv05}  or \citet{Schott07} all assume
Gaussian variables.  In contrast, Theorem~\ref{T4.1} applies for
general distributions having a fourth moment. For these non Gaussian
data, we consider the corrected LRT as generalized
pseudo-likelihood ratio test (or Gaussian  LRT).

Moreover, the methods proposed by these authors all rely  on an appropriate
normalization of 
the trace of squared difference between two sample covariances
following the  
idea of \citet{B96}.
We believe that their method
would strongly depend on the normality assumption (which was supported  
by simulation results below).
On the other hand, based on
general understanding, the LRT contains much higher information from  
data and its poor performance observed up to now  is just caused by its large bias
when dimension is large. Thus,
from the intuitive understanding, we  
are confined ourselves to modify the LRT.

Let us develop in more details an example. Assume that \textbf{x}
follows  a normalized $t$-distribution with 5 degree of freedom,
that is $ \textbf{x}=\sqrt{\frac{3}{5}} t(5), $ \textbf{x} and
\textbf{y} are i.i.d., hence  $E\textbf{x}=E\textbf{y}=0, $ $
E|\textbf{x}|^2=E|\textbf{y}|^2=1 $ and $
E|\textbf{x}|^4=E|\textbf{y}|^4=9. $  We still  employ the result in
Theorem \ref{T4.1} for the test of equality between two covariance
matrices, where
\begin{eqnarray}
  m_1(f)=&
  \displaystyle{\frac{1}{2}\Big[\log\left(\frac{y_1+y_2-y_1y_2}{y_1+y_2}\right)
      -\frac{y_1}{y_1+y_2}\log(1-y_2)-\frac{y_2}{y_1+y_2}\log(1-y_1)}\nonumber\\[2mm]
    &\quad\quad\quad\quad\quad\quad\quad\quad
    +\displaystyle{\frac{6y_1^2y_2}{(y_1+y_2)^2}+\frac{6y_1y_2^2}{(y_1+y_2)^2}\Big]}\label{m1f}
\end{eqnarray}
and
\begin{equation}
  \upsilon_1(f)=-\frac{2y_2^2}{(y_1+y_2)^2}\log(1-y_1)-\frac{2y_1^2}{(y_1+y_2)^2}\log(1-y_2)
  -2\log\frac{y_1+y_2}{y_1+y_2-y_1y_2}\label{v1f}
\end{equation}
instead of $ m(f) $  and $ \upsilon(f) $ for real case,
respectively. (\ref{m1f}) and (\ref{v1f}) are calculated in
\S\ref{sec:proofs}.

The following Table~\ref{tab:t} summarizes a simulation study where
we compare  this corrected pseudo-LRT with the test proposed in
\citet{Schott07}.  We use  1,000 independent replications with the
above $t$-distributed variables. Again, the nominal  test level is
$\alpha=  0.05$. As we can see, the corrected pseudo-LRT performs
correctly while Schott's test is no more valid here since the
variables are not Gaussian.

% Table 3
\begin{table}[htb]
  \begin{center}
    \begin{tabular}{|l|c|c|}
      \hline \multicolumn{3}{|c|}{(y1, y2)=(0.05, 0.1)~~}\\
      \hline {(p, $n_1$,  $n_2$ ) } & {CLRT Size}  & {Schott's Size}
      \\[1mm]\hline
      (10,100, 200) & 0.067&0.517\\[2mm]
      (20, 200, 400) & 0.065&0.603  \\[2mm]
      (40, 400, 800) & 0.054&0.703 \\[2mm]
      (80, 800, 1600)&0.048 &0.764 \\[2mm]
      (160, 1600, 3200)   &0.045&0.826\\[2mm]
      (320, 3200, 6400)   &0.051&0.854\\[2mm]
      \hline
    \end{tabular}
    \caption{Sizes  of the corrected pseudo-likelihood ration test  and Schott's test for the case
      of $y_1=0.1,~ y_2=0.05$, based on 1,000 independent replications with
      normalized $t$-distributed variables with  5 degrees of freedom. \label{tab:t}}
  \end{center}
\end{table}

\section{Proofs}
\label{sec:proofs}

\subsection*{Proof of (\protect\ref{singlmean})}

By Theorem \ref{T1.1}, for $g(x)= x-\log x-1$, by using the variable
change 
$x=1+y-2\sqrt{y}\cos\theta,~ 0\leq \theta \leq \pi$, we have 
\begin{eqnarray*}
  m(g)&=&\frac{g\left(a(y)\right)+g\left(b(y)\right)}{4} -
  \frac{1}{2\pi}\int_{a(y)}^{b(y)}\frac{g(x)}{\sqrt{4y-(x-1-y)^2}}dx\\
  &=&\frac{y-\log(1-y)}{2}-\frac{1}{2\pi}\int_{0}^{\pi}
  \left[1+y-2\sqrt{y}\cos\theta-\log(1+y-2\sqrt{y}\cos\theta)-1\right]d\theta\\
  &=&\frac{y-\log(1-y)}{2}-\frac{1}{4\pi}\int_{0}^{2\pi}
  \left[y-2\sqrt{y}\cos\theta-\log|1-\sqrt{y}e^{i
      \theta}|^2\right]d\theta\\
  &=&-\frac{\log(1-y)}{2},
\end{eqnarray*}
where $\displaystyle\int_{0}^{2\pi}\log|1-\sqrt{y}e^{i
  \theta}|^2d\theta=0$ is calculated in \cite{B04}.

\subsection*{Proof of (\ref{singlvar})}

For $g(x)=x-\log x -1$, by Theorem \ref{T1.1}, we have
$$\upsilon(g)=-\frac1{2\pi^2}\oint\oint\frac{g(z_1)g(z_2)}{(\underline{m}(z_1)-\underline{m}(z_2))^2}
d\underline{m}(z_1)d\underline{m}(z_2)$$ and
\begin{eqnarray*}
  g(z_1)g(z_2)&=&z_1z_2-z_1\log z_2-z_2\log z_1+\log z_1\log z_2\\
  \quad\quad &&-z_1+\log z_1 -z_2+\log z_2+1.
\end{eqnarray*}
It is easy to see that  $\upsilon(\textbf{1},\textbf{1})=0$, where
\textbf{1} means constant function equals to 1.  For Stieltjes
transform of $F^y$, the following equation is given in \cite{B04},
for $z \in \mathbb{C}^{+}$,
\begin{equation}
  z=-\frac{1}{\underline{m}(z)}+\frac{y}{1+\underline{m}(z)}.
\end{equation}\\
Let  $m_i=\underline{m}(z_i),~~i=1,2$.  For fixed $m_2$,~we
have on a contour enclosed 1, $(y-1)^{-1}$ and -1, but not 0, 
\begin{eqnarray*}
  \displaystyle {\oint \frac{\log \left(z(m_1)\right)}
    {(m_1-m_2)^2}dm_1}
  &=& \displaystyle {\oint \frac{\frac{1}{m_1^2}
      -\frac{y}{(1+m_1)^2}}
    {-\frac{1}{m_1}+\frac{y}{1+m_1}}\frac{1}{( m_1-m_2)}dm_1} \\
  &=&\displaystyle{\oint
    \frac{(1+m_{1})^2-ym_1^2}{ym_1(m_1-m_2)}\left(\frac{-1}{m_{1}+1}
    +\frac{1}{m_1-\frac{1}{y-1}}\right)dm_1}\\
  &=&\displaystyle{2\pi i\cdot\left(
    \frac{1}{m_2+1}-\frac{1}{m_2-\frac{1}{y-1}}\right)}.
\end{eqnarray*}
and
\begin{eqnarray*}
  &\quad&\displaystyle {\oint \frac{-\frac{1}{m_1}
      +\frac{y}{1+m_1}}
    {(m_1-m_2)^2}dm_1}\\
  &=&\displaystyle{y\oint (\frac{1}{1+m_{1}}+\frac{1-y}{y}) \cdot
    [1-(1+m_{1})]^{-1}
    \cdot (m_{2}+1)^{-2} \cdot (1-\frac{m_{1}+1}{m_{2}+1})^{-2}
    dm_{1}}\\
  &=&\displaystyle {y\oint (\frac{1}{1+m_{1}}+\frac{1-y}{y}) \cdot
    \sum\limits^\infty_{j=0}(1+m_1)^j (m_2+1)^{-2}
    \sum\limits^\infty_{\ell=1}\ell(\frac{m_{1}+1}{m_{2}+1})^{\ell-1}dm_1}\\
  &=&\displaystyle{2\pi i\cdot  \frac{y}{(m_2+1)^{2}}}.
\end{eqnarray*}
Then we also get $\upsilon(-z_1+\log z_1, ~\textbf{1})=0$.
Similarly, $\upsilon(\textbf{1}, ~ -z_2+\log z_2)=0$.
Furthermore, 
\begin{eqnarray*}
  \upsilon(z_1,z_2)&=&\displaystyle{\frac {y^2}{\pi
      i}\oint\frac{1}{(m_2+1)^{2}}(\displaystyle\frac{1}{1+m_{2}}+\displaystyle\frac{1-y}{y})
    \displaystyle\sum\limits^\infty_{j=0}(1+m_2)^jdm_2}
  =2y,
\end{eqnarray*}
and 
\begin{eqnarray*}
  \upsilon(z_1, \log z_2)
  &=&\displaystyle{ \frac{y}{\pi i}\oint( \frac{1}{m_2+1}- \frac
    {1}{m_2-1/(y-1)})
    (\frac{1}{1+m_{2}}+\frac{1-y}{y}) \cdot [1-(1+m_{2})]^{-1} dm_2}\\
  &=&\displaystyle{ \frac{y}{\pi i} \oint( \frac{1}{m_2+1}- \frac
    {1}{m_2-1/(y-1)})(\frac{1}{1+m_{2}}+\displaystyle\frac{1-y}{y})
    \sum\limits^\infty_{j=0}(1+m_2)^j dm_2} \\
  &=&2y.
\end{eqnarray*}
By a computation in  \cite{B04}, we know that $ \upsilon(\log z_1,
\log z_2)= -2\log(1-y)$.  Finally, we obtain
\begin{eqnarray*}
  \upsilon(g)&=&\upsilon(z_1, z_2)+\upsilon(\log z_1, \log
  z_2)-2\upsilon(z_1, \log z_2)\\
  &&\quad+\upsilon(-z_1+\log z_1,
  \textbf{1})+\upsilon(\textbf{1},-z_2+\log z_2)+\upsilon(\textbf{1}, \textbf{1})\\
  &=&-2\log (1-y)-2y.
\end{eqnarray*}

\subsection*{Proof of (\ref{singllimit})}

Since  $F^{y_n}$  is the Mar\v{c}enko-Pastur law of index $y_n$,
by using the variable change $x=1+y_n-2\sqrt{y_n}\cos \theta, ~ 0\leq
\theta \leq \pi$ we have
\begin{eqnarray*}
  F^{y_n}(g)&=&\int^{b(y_n)}_{a(y_n)}\frac {x-\log x-1}{2\pi
    xy_n}\sqrt{(b(y_n)-x)(x-a(y_n))} dx\\
  &=&\frac{1}{2\pi
    y_n}\int_0^{\pi}\left[1-\frac{\log(1+y_n-2\sqrt{y_n}\cos\theta)+1}
    {1+y_n-2\sqrt{y_n}\cos\theta}\right]4y_n\sin^2\theta d\theta\\
  &=&\frac{1}{2\pi}\int_0^{2\pi}\left[2\sin^2\theta-\frac{2\sin^2\theta}{1+y_n-2\sqrt{y_n}\cos\theta}
    \left(\log |1-\sqrt{y_n}e^{i\theta}|^2-1\right)\right]d \theta\\
  &=&1-\frac{y_n-1}{y_n}\log (1-y_n),
\end{eqnarray*}
where
$$\displaystyle{\frac{1}{2\pi}\int_0^{2\pi}\frac{2\sin^2\theta}{1+y_n-2\sqrt{y_n}\cos\theta}
  \log |1-\sqrt{y_n}e^{i\theta}|^2d \theta
}=\displaystyle{\frac{y_n-1}{y_n}\log (1-y_n)-1}$$ is calculated in
\cite{B04}.

\subsection*{ Proof of Lemma  \ref{lem1}}

We use the variable change  $x=(1-y_2)^{-2}(1+h^2-2h\cos \theta)$, where
$h=\sqrt{y_1+y_2-y_1y_2}$.  When $c,d $ satisfy
$c^2+d^2=a(1-y_2)^2+b(1+h^2),~ cd=bh,~ 0 < d < c,$ we have
\begin{equation*}
  f(z(\xi))=\log(a+bz(\xi))=\log\left(\frac{\left|c+d\xi\right|^2}{(1-y_2)^2}\right).
\end{equation*}
Similarly,
$$g(z(\xi))=\log(\alpha+\beta
z(\xi))=\log\left(\frac{\left|\gamma+\eta\xi\right|^2}{(1-y_2)^2}\right).$$
Let 
$$
\widetilde{f}(z(\xi))=\log\left(\frac{\left(c+d\xi\right)^2}{(1-y_2)^2}\right)
\quad \mbox{and}
\quad\widetilde{g}(z(\xi))=\log\left(\frac{\left(\gamma+\eta\xi\right)^2}{(1-y_2)^2}\right).
$$
Note that $f(z(\xi))=\Re(\widetilde{f}(z(\xi)))\quad \mbox{and}
\quad g(z(\xi))=\Re(\widetilde{g}(z(\xi)))$. By  Theorem \ref{T2.1},
we have
\begin{eqnarray}
  m(f)&=& \displaystyle{\frac{1}{4\pi
      i}\oint_{|\xi|=1}
    f(z(\xi))\left[\frac{1}{\xi-{1\over r}}+\frac{1}{\xi+{1\over r}}-\frac{2}{\xi+{y_2\over {hr}}}\right]d\xi}\nonumber\\
  &=&\displaystyle{\frac{1}{4\pi}\int^{2\pi}_{0}
    f(z(e^{i\theta}))\left[\frac{1}{e^{i\theta}-{1\over r}}
      +\frac{1}{e^{i\theta}+{1\over r}}-\frac{2}{e^{i\theta}+{y_2\over {hr}}}\right]e^{i\theta}d\theta}\nonumber\\
  &=&\displaystyle{\frac{1}{4\pi}\int^{2\pi}_{0}
    f(z(e^{i\theta}))\left[\frac{1}{e^{-i\theta}-{1\over r}}
      +\frac{1}{e^{-i\theta}+{1\over r}}-\frac{2}{e^{-i\theta}+{y_2\over
          {hr}}}\right]e^{-i\theta}d\theta}\nonumber\\
  &=&\displaystyle\frac{1}{8\pi}\int^{2\pi}_{0}
  f(z(e^{i\theta}))\Bigg\{\left[\frac{1}{e^{i\theta}-{1\over r}}
    +\frac{1}{e^{i\theta}+{1\over r}}-\frac{2}{e^{i\theta}+{y_2\over {hr}}}\right]e^{i\theta}+\nonumber\\
  &&\left[\frac{1}{e^{-i\theta}-{1\over
        r}}+\frac{1}{e^{-i\theta}+{1\over r}}
    -\frac{2}{e^{-i\theta}+{y_2\over {hr}}}\right]e^{-i\theta}\Bigg\}d\theta\nonumber\\
  &=&\frac{1}{8\pi}\Re\Bigg\{\int^{2\pi}_{0}
  \widetilde{f}(z(e^{i\theta}))\Bigg[\left(\frac{1}{e^{i\theta}-{1\over
        r}}
    +\frac{1}{e^{i\theta}+{1\over r}}-\frac{2}{e^{i\theta}+{y_2\over {hr}}}\right)e^{i\theta}+\nonumber\\
    &&\left(\frac{r}{r-e^{i\theta}}+\frac{r}{r+e^{i\theta}}
    -\frac{2hr}{y_2e^{i\theta}+hr}\right)\Bigg]d\theta\Bigg\}\nonumber\\
  &=&\Re\Bigg\{\frac{1}{8\pi i}\oint_{|\xi|=1}
  \widetilde{f}(z(\xi))\Bigg[\left(\frac{1}{\xi-{1\over
        r}}+\frac{1}{\xi+{1\over r}}-\frac{2}{\xi+{y_2\over {hr}}}
    \right)\nonumber\\
    &&+\left(\frac{r}{r-\xi}+\frac{r}{r+\xi}
    -\frac{2hr}{y_2\xi+hr}\right)\xi^{-1}\Bigg]d\xi\Bigg\}\nonumber\\
  &=&\frac{1}{4}\left(\widetilde{f}(z(\frac{1}{r}))+\widetilde{f}(z(-\frac{1}{r}))
  -2\widetilde{f}(z(-\frac{y_2}{hr}))\right)\nonumber\\
  &\rightarrow& ^{r\downarrow1}\frac{1}{4}\left[\widetilde{f}(z(1))+\widetilde{f}(z(-1))-2\widetilde{f}(z(-\frac{y_2}{h}))\right]\nonumber\\
  &=&\frac{1}{2}\log\frac{(c^2-d^2)h^2}{(ch-y_2d)^2}.\nonumber
\end{eqnarray}

Let $ m_j=-\frac{1+hr_j\xi_j}{1-y_2} $,  where $|\xi_j|=1, j=1,2,$ $
r_2\downarrow r_1,$ and $r_1\downarrow 1$. By  Theorem~\ref{T2.1},
we have
$$\upsilon(f, g)=-\frac{1}{2\pi ^2}\oint_{|\xi_2|=1}
\left\{\oint_{|\xi_1|=1}\frac{f(z(r_1\xi_1))}{(r_2\xi_2-r_1\xi_1)^2}\cdot
r_1r_2 d\xi_1\right\}g(z(r_2\xi_2))d\xi_2.
$$
When $ r_1\downarrow 1, \quad -\frac{d}{cr_1} \quad \mbox{and}\quad
0$  are poles.  We can then  choose  $r_1$ so that 
 $-\frac{c}{dr_1}$  is a not a  pole. Then we get
\begin{eqnarray*}
  &&\displaystyle{\oint_{|\xi_1|=1}\frac{\log(a+bz(r_1\xi_1))}{(r_2\xi_2-r_1\xi_1)^2}\cdot
    r_1r_2 d\xi_1}\\[3mm]
  &=&\displaystyle{\oint_{|\xi_1|=1}\frac{\left(\log(a+bz(r_1\xi_1))\right)'}{r_1\xi_1-r_2\xi_2}\cdot
    r_2 d\xi_1}\\[3mm]
  &=&\displaystyle{\oint_{|\xi_1|=1}\Bigg[\frac{bhr_1\xi_1}{(r_1\xi_1-r_2\xi_2)(c+dr_1\xi_1)c}\cdot
      \frac{1}{\xi_1+\frac{d}{cr_1}}}\\
    &&\quad-\frac{bhr_1^{-1}}{(r_1\xi_1-r_2\xi_2)(c+dr_1\xi_1)c}\cdot
    \frac{1}{(\xi_1+\frac{d}{cr_1})\xi_1}\cdot r_2\Bigg]d\xi_1\\[5mm]
  &=&\displaystyle{2\pi i}\left(
  \frac{bhd^{-1}c^{-1}}{\xi_2}-\frac{bhd^{-1}r_2}{d+cr_2\xi_2}\right).
\end{eqnarray*}
So,
$$
\upsilon(f, g) =-\frac{i}{\pi}
\oint_{|\xi_2|=1}\left(\frac{bhd^{-1}c^{-1}}{\xi_2}
-\frac{bhd^{-1}r_2}{d+cr_2\xi_2}\right) \log\left(\alpha+\beta
z(r_2\xi_2)\right)d\xi_2.
$$
Since the function $g(x)=\log(\alpha+\beta x) $ is analytic, when $
r_2>1$  but sufficiently close to 1, we have
$$
\left|g(z(r\xi_2))-g(z(\xi_2))\right| \leq K(r-1),
$$
for some constant $K$. Thus we have
$$
\begin{array}{lll}
  &\displaystyle{\left|\oint_{|\xi|_2=1}
    \left[g(z(r_2\xi_2))-g(z(\xi_2))\right]
    \left(\frac{bhd^{-1}c^{-1}}{\xi_2}-\frac{bhd^{-1}r_2}{d+cr_2\xi_2}\right)d\xi_2\right|}\\[4mm]
  \rightarrow & 0 \quad \mbox{as} \quad r_2 \downarrow 1,
\end{array}
$$
where the estimations are done according to$|\arg(\xi_2)|$ or
$|\arg(\xi_2)-\pi| \leq \sqrt{r_2-1}$ or not. Thus,
$$
\upsilon(f, g)=-\frac{i}{\pi} \oint_{|\xi_2|=1}
g(z(\xi_2))\left(\frac{bhd^{-1}c^{-1}}{\xi_2}-\frac{bhd^{-1}r_2}{d+cr_2\xi_2}\right)d\xi_2+R(r_2)
$$
where $ R(r_2) \rightarrow 0 , \quad\mbox{as}\quad  r_2\downarrow1.$ 
Because $\displaystyle{g(z(\xi_2))=\log\left(\frac{\left|\gamma+\eta\xi_2\right|^2}{(1-y_2)^2}\right),}$
for $\gamma, \eta$ satisfying
$\gamma^2+\eta^2=\alpha(1-y_2)^2+\beta(1+h^2),~ \gamma\eta=\beta h,~
0 < \eta < \gamma,$ and if 
$\displaystyle{\widetilde{g}(z(\xi_2))=\log\left(\frac{\left(\gamma+\eta\xi_2\right)^2}{(1-y_2)^2}\right),}$
we have  $g(z(\xi_2))=\Re\left(\widetilde{g}(z(\xi_2))\right) $.
Therefore, 
\begin{eqnarray*}
  \lefteqn{    \upsilon(f, g)=-\frac{i}{\pi}
    \oint_{|\xi|_2=1}  g(z(\xi_2))
    \left( \frac{bhd^{-1}c^{-1}}{\xi_2} -
    \frac{bhd^{-1}r_2}{d+cr_2\xi_2}      \right)d\xi_2}\\
  &=&\frac{1} {\pi} \int^{2\pi}_{0} g(z(e^{i\theta}))\left(\frac{bhd^{-1}c^{-1}}{e^{i\theta}}
  -\frac{bhd^{-1}r_2}{d+cr_2e^{i\theta}}\right)e^{i\theta}d\theta\\
  &=&^{\theta \rightarrow 2\pi-\theta} ~~\frac{1}{\pi}
  \int^{2\pi}_{0}g(z(e^{i\theta}))\left(\frac{bhd^{-1}c^{-1}}{e^{-i\theta}}
  -\frac{bhd^{-1}r_2}{d+cr_2e^{-i\theta}}\right)e^{-i\theta}d\theta\\
  &=&\frac{1}{2\pi}\int_{0}^{2\pi}g(z(e^{i\theta}))\Bigg[\left(\frac{bhd^{-1}c^{-1}}{e^{i\theta}}
    -\frac{bhd^{-1}r_2}{d+cr_2e^{i\theta}}\right)
    e^{i\theta}
    +bhd^{-1}c^{-1}-\frac{bhd^{-1}r_2}{de^{i\theta}+cr_2}\Bigg]d\theta\\
  &=&\frac{1}{2\pi}\Re\Bigg\{\int_{0}^{2\pi}\widetilde{g}(z(e^{i\theta}))
  \Bigg[\left(\frac{bhd^{-1}c^{-1}}{e^{i\theta}}-\frac{bhd^{-1}r_2}{d+cr_2e^{i\theta}}\right)
    e^{i\theta}
    +bhd^{-1}c^{-1}-\frac{bhd^{-1}r_2}{de^{i\theta}+cr_2}\Bigg]d\theta\Bigg\}\\
  &=&\Re\Bigg\{\frac{1}{2\pi
    i}\oint_{|\xi|_2=1}\widetilde{g}(z(\xi_2))\Bigg[\left(\frac{bhd^{-1}c^{-1}}{\xi_2}-\frac{bhd^{-1}r_2}{d+cr_2\xi_2}\right)
    +\left(bhd^{-1}c^{-1}-\frac{bhd^{-1}r_2}{d\xi_2+cr_2}\right)\xi_2^{-1}\Bigg]d\xi_2\Bigg\}\\
  &=&bhd^{-1}c^{-1}\left[\widetilde{g}(z(0))-\widetilde{g}(z(-\frac{d}{cr_2}))\right]\\
  &\rightarrow& bhd^{-1}c^{-1}\left[\widetilde{g}(z(0))-\widetilde{g}(z(-\frac{d}{c}))\right]\\
  &=&2bhd^{-1}c^{-1}\log\frac{c\gamma}{c\gamma-d\eta}.
\end{eqnarray*}

\subsection*{Proof of (\ref{testE}) and (\ref{testVar})}
Because \textbf{$\xi$}  and  \textbf{$\eta$}   are Gaussian
variables, for real case, $\beta=E|$\textbf{$\xi$}$|^4-3=0,$ then
(\ref{E1betax}), (\ref{E1betay}) and (\ref{cov1betax})
are all 0. Consider (\ref{E1}) and (\ref{cov1}),
as $y_{n_k} \rightarrow y_k, ~k=1,2,$, by the computations done in the
proof of  Lemma~\ref{lem1},  we see that termes tending to zero 
could be neglected in the considered contour integrals.
Hence we can put $ y_{n_k}=y_k, k=1,2 $ and   use
$$
f(x)=\log(y_{1}+y_{2}x)-\frac{y_{2}}{y_{1} +y_{2}}\log
x-\log(y_{1}+y_{2})
$$
instead of
$f(x)=\displaystyle{\log(y_{n_1}+y_{n_2}x)-\frac{y_{n_2}}{y_{n_1}
    +y_{n_2}}\log x-\log(y_{n_1}+y_{n_2})}$.  
Consider  the variable change 
$x=(1-y_2)^{-2}(1+h^2-2h\cos \theta)$, where
$z(\xi)=(1-y_2)^{-2}\left[1+h^2+2h\mathcal{R}(\xi)\right], \quad h
=\sqrt{y_1+y_2-y_1y_2}$. As 
\begin{eqnarray*}
  \log(y_{n_1}+y_{n_2}z(\xi))& = & \log\left(\frac{\left|h+y_{2}\xi\right|^2}{(1-y_2)^2}\right),\\
  \log(z(\xi))& = &
  \log\left(\frac{\left|1+h\xi\right|^2}{(1-y_2)^2}\right),
\end{eqnarray*}
we have by  Lemma~\ref{lem1}, 
\begin{eqnarray*}
  m(f)&=&\displaystyle{\frac{1}{2}\left[\log\frac{(h^2-y_2^2)h^2}{(h^2-y_2^2)^2}
      -\frac{y_2}{y_1+y_2}\log\frac{(1-h^2)h^2}{(h-y_2h)^2}\right]}\\
  &=&\displaystyle{\frac{1}{2}\left[\log\left(\frac{y_1+y_2-y_1y_2}{y_1+y_2}\right)
      -\frac{y_1}{y_1+y_2}\log(1-y_2)-\frac{y_2}{y_1+y_2}\log(1-y_1)\right]},
\end{eqnarray*}
and 
\begin{eqnarray*}
  \upsilon(f)&=&\upsilon\big(\log(y_{n_1}+y_{n_2}x)\big)+\frac{y_2^2}{(y_1+y_2)^2}\upsilon
  \big(\log x\big)-\frac{2y_2}{y_1+y_2}\upsilon\big(\log x, \log
  (y_{n_1}+y_{n_2}x)\big)\\
  &=&2\log
  \frac{h^2}{h^2-y_2^2}+2\frac{y_2^2}{(y_1+y_2)^2}\log\frac{1}{1-h^2}-\frac{4y_2}{y_1+y_2}\log
  \frac{1}{1-y_2}\\
  &=&-\frac{2y_2^2}{(y_1+y_2)^2}\log(1-y_1)-\frac{2y_1^2}{(y_1+y_2)^2}\log(1-y_2)
  -2\log\frac{y_1+y_2}{y_1+y_2-y_1y_2}.
\end{eqnarray*}

\subsection*{Proof of $F_{y_{n_1}, y_{n_2}}(f)$}

By (\ref{f(x)}) and  the density of $F_{y_{n_1}, y_{n_2}}(f)$ (the
limiting distribution in (\ref{LSDden}) but with $y_{n_k}$ in place
of $y_k, k=1,2.$ ), where
$h_n=\sqrt{y_{n_1}+y_{n_2}-y_{n_1}y_{n_2}}$,
$a_n=(1-y_{n_2})^{-2}\left(1-\sqrt{y_{n_1}+y_{n_2}-y_{n_1}y_{n_2}}\right)^2$
and
$b_n=(1-y_{n_2})^{-2}\left(1+\sqrt{y_{n_1}+y_{n_2}-y_{n_1}y_{n_2}}\right)^2.$
Using the  substitution $
x=(1-y_{n_2})^{-2}\left(1+h_n^2-2h_n\cos\theta\right), \quad 0 <
\theta < \pi, $   we have 
\begin{eqnarray}
  \sqrt{(b_n-x)(x-a_n)}= \frac{2h_n\sin\theta}{(1-y_{n_2})^2},
  \quad&\quad
  dx=\displaystyle\frac{2h_n\sin\theta d\theta}{(1-y_{n_2})^2};\nonumber\\
  x=\frac{\left|1-h_ne^{i\theta}\right|^2}{(1-y_{n_2})^2},\quad&\quad
  \displaystyle{y_{n_1}+y_{n_2}x}=\displaystyle{\frac{\left|h_n-y_{n_2}e^{i\theta}\right|^2}
    {(1-y_{n_2})^2}}.\nonumber
\end{eqnarray}
Therefore,
\begin{eqnarray*}
  &&{F^{y_{n_1},      y_{n_2}}(f)}\\[5mm]
  &=&{\int_{a_n}^{b_n}f(x)\frac{(1-y_{n_2})\sqrt{(b_n-x)(x-a_n)}}{2\pi
      x(y_{n_1}+y_{n_2}x)}dx}\\[6mm]
  &=&{(1-y_{n_2})\int_{a_n}^{b_n}
    \left[\log\left(y_{n_1}+y_{n_2}x\right)
      -\frac{y_{n_2}}{y_{n_1}+y_{n_2}}\log
      x\right]\frac{\sqrt{(b_n-x)(x-a_n)}}{2\pi x(y_{n_1}+y_{n_2}x)}dx}\\
  &&\quad\quad-{\log\left(y_{n_1}+y_{n_2}\right)}\\[6mm]
  &=&{\frac{2(1-y_{n_2})}{\pi}\int_{0}^{\pi}\left[
      \log\frac{\left|h_n-y_{n_2}e^{i\theta}\right|^2}{(1-y_{n_2})^2}
      -\frac{y_{n_2}}{y_{n_1}+y_{n_2}}\log\frac{\left|1-h_ne^{i\theta}\right|^2}
      {(1-y_{n_2})^2}\right]}\\[6mm] &&\quad\quad
  {\cdot\frac{h_n^2\sin^2\theta}{\left|1-h_ne^{i\theta}\right|^2
      \left|h_n-y_{n_2}e^{i\theta}\right|^2}d\theta}-{
    \log\left(y_{n_1}+y_{n_2}\right)}\\[6mm]
  &=&{\frac{2(1-y_{n_2})}{\pi}\int_{0}^{\pi} \left[
      \log\left|h_n-y_{n_2}e^{i\theta}\right|^2
      -\frac{y_{n_2}}{y_{n_1}+y_{n_2}}\log\left|1-h_ne^{i\theta}\right|^2
      \right]} \\[4mm]
  &&\cdot{\frac{h_n^2\sin^2\theta}
    {\left|1-h_ne^{i\theta}\right|^2\left|h_n-y_{n_2}e^{i\theta}\right|^2}d\theta}
  -2\left(1-\frac{y_{n_2}}{y_{n_1}+y_{n_2}}\right)
  \log(1-y_{n_2})-\log{(y_{n_1}+y_{n_2})}\\
  &=&{\Re\Bigg\{\frac{2(1-y_{n_2})}{\pi}\int_{0}^{2\pi}
    \left[ \log(h_n-y_{n_2}e^{i\theta})
      -\frac{y_{n_2}}{y_{n_1}+y_{n_2}}\log(1-h_ne^{i\theta})\right]}\\
  &&
      {\frac{h_n^2\sin^2\theta}{\left|1-h_ne^{i\theta}\right|^2
          \left|h_n-y_{n_2}e^{i\theta}\right|^2}d\theta\Bigg\}}-\frac{2y_{n_1}}{y_{n_1}+y_{n_2}}
      \log(1-y_{n_2})-\log{(y_{n_1}+y_{n_2})}\\[5mm]
      &=&{\Re\Bigg\{\frac{-(1-y_{n_2})}{2\pi i}\oint_{|z|=1}
        \left[\log(h_n-y_{n_2}z)-\frac{y_{n_2}}{y_{n_1}+y_{n_2}}\log(1-h_nz)\right]}\\[5mm]
      &&\cdot{\frac{h_n^2(z-z^{-1})^2}{z\left|1-h_nz\right|^2
          \left|h_n-y_{n_2}z\right|^2}dz\Bigg\}}-{\frac{2y_{n_1}}{y_{n_1}+y_{n_2}}
        \log(1-y_{n_2})-\log{(y_{n_1}+y_{n_2})}}\\[5mm]
      &=&{\Re\Bigg\{\frac{y_{n_2}-1}{y_{n_2}}\cdot\frac{1}{2\pi
          i}\oint_{|z|=1}\left[
          \log(h_n-y_{n_2}z)-\frac{y_{n_2}}{y_{n_1}+y_{n_2}}\log(1-h_nz)\right]}\\[6mm]
      &&\cdot{\frac{(z^2-1)^2}{z(z-h_n)(z-\frac{1}{h_n})
          (z-\frac{y_{n_2}}{h_n})(z-\frac{h_n}{y_{n_2}})}dz}\Bigg\}
      -{\frac{2y_{n_1}}{y_{n_1}+y_{n_2}}
        \log(1-y_{n_2})-\log{(y_{n_1}+y_{n_2})}}.
\end{eqnarray*}
There are three poles inside the unit circle: 0, $h_n, y_{n_2}/h_n$.
Their corresponding residues are
\begin{eqnarray}
  R(0) &=& \frac{y_{n_2}-1}{y_{n_2}}\log (h_n) ,\nonumber\\
  R(h_n)&=& \frac{(h_n^2-1)}{(h_n^2-y_{n_2})}\left[\log (h_n)+\log
    (1-y_{n_2})-\frac{y_{n_2}}{y_{n_1}+y_{n_2}}\log (1-h^2_n)
    \right],\nonumber\\
  R(\frac{y_{n_2}}{h_n})&=&
  \frac{(y_{n_2}^2-h_n^2)}{y_{n_2}(y_{n_2}-h_n^2)} \left[\log
    (h^2_n-y_{n_2}^2)-\log (h_n)-\frac{y_{n_2}}{y_{n_1}+y_{n_2}}\log
    (1-y_{n_2}) \right].\nonumber
\end{eqnarray}
Therefore,
\begin{eqnarray*}
  F^{y_{n_1}, y_{n_2}}(f)&=&
  R(0)+R(h_n)+R(\frac{y_{n_2}}{h_n})\displaystyle{-\frac{2y_{n_1}}{y_{n_1}+y_{n_2}}
    \log(1-y_{n_2})-\log{(y_{n_1}+y_{n_2})}} \\
  &=&\displaystyle{\frac{-(y_{n_1}+y_{n_2}-y_{n_1}y_{n_2})}{y_{n_1}y_{n_2}}\log{(y_{n_1}+y_{n_2}-y_{n_1}y_{n_2})}}\\[2mm]
  &&\quad
  +\displaystyle{\frac{(y_{n_1}+y_{n_2}-y_{n_1}y_{n_2})}{y_{n_1}y_{n_2}}\log{(y_{n_1}+y_{n_2})}}
  +\displaystyle{\frac{y_{n_1}(1-y_{n_2})}{y_{n_2}(y_{n_1}+y_{n_2})}\log{(1-y_{n_2})}}\nonumber\\
  &&\quad\quad\quad+\displaystyle{\frac{y_{n_2}(1-y_{n_1})}{y_{n_1}(y_{n_1}+y_{n_2})}\log{(1-y_{n_1})}}.\nonumber
\end{eqnarray*}

\subsection*{Proof of (\ref{m1f}) and (\ref{v1f})}

Because \textbf{x} and \textbf{y}  are random variables from
normalized $t$-distribution with 5 degree of freedom, \textbf{x} and
\textbf{y} are $i.i.d.$, $E\textbf{x}=E\textbf{y}=0, $ $
E|\textbf{x}|^2=E|\textbf{y}|^2=1 $ and $
E|\textbf{x}|^4=E|\textbf{y}|^4=9. $   For real case,
$\beta=E|$\textbf{$\xi$}$|^4-3=6,$ (\ref{E1}) and (\ref{cov1}) items
are the same to the Gaussian variables. Consider the items
(\ref{E1betax}), (\ref{E1betay}) and (\ref{cov1betax}). As the same
explanation in Proof of (\ref{testE}) and (\ref{testVar}), we use $
f(x)=\log(y_{1}+y_{2}x)-\frac{y_{2}}{y_{1} +y_{2}}\log
x-\log(y_{1}+y_{2}) $ instead.

For (\ref{E1betax}), we have 
\begin{eqnarray*}
  &&\frac{\beta\cdot y_1(1-y_2)^2}{2\pi i \cdot h^2} \oint_{|\xi|=1}
  \left[\log\frac{|h+y_2\xi|^2}{(1-y_2)^2}-\frac{y_2}{y_1+y_2}\log\frac{|1+h\xi|^2}{(1-y_2)^2}-\log(y_1+y_2)\right]\\
  &&\quad\quad\quad\quad\quad\quad \quad\quad\quad\quad\quad\quad
  \quad\quad\quad\quad\quad\quad\quad\quad\quad\quad\quad\quad
  \quad\quad\quad\cdot \frac{1}{(\xi+\frac{y_2}{hr})^3} d\xi\\
  &=&\frac{\beta\cdot y_1(1-y_2)^2}{2\pi i \cdot h^2} \oint_{|\xi|=1}
  2
  \mathcal{R}\Big\{\log(h+y_2\xi)-\frac{y_2}{y_1+y_2}\log(1+h\xi)\Big\}\cdot
  \frac{1}{(\xi+\frac{y_2}{hr})^3} d\xi\\
  &=&\frac{\beta\cdot y_1(1-y_2)^2}{2\pi i \cdot h^2}
  \oint_{|\xi|=1}\Bigg\{\log(h+y_2\xi)+\log(h+y_2\overline{\xi})\\
  && \quad\quad\quad\quad\quad\quad\quad\quad\quad\quad
  -\frac{y_2}{y_1+y_2}\Big[\log(1+h\xi)+\log(1+h\overline{\xi})\Big]\Bigg\}
  \cdot\frac{1}{(\xi+\frac{y_2}{hr})^3} d\xi\\
  &=&\frac{\beta\cdot y_1(1-y_2)^2}{2\pi i \cdot h^2}
  \oint_{|\xi|=1}\left[\log(h+y_2\xi)-\frac{y_2}{y_1+y_2}\log(1+h\xi)\right]
  \\ &&\quad\quad\quad\quad\quad\quad\quad\quad\quad\quad\quad\quad
  \quad\quad\quad\quad\quad\quad\quad\quad\quad
  \left[\frac{1}{(\xi+\frac{y_2}{hr})^3}
    +\frac{\left(\frac{hr}{y_2}\right)^3\xi}{(\frac{hr}{y_2}+\xi)^3}
    \right] d\xi\\
  &=&\frac{\beta\cdot y_1(1-y_2)^2}{2\pi i \cdot h^2} \cdot 2\pi \cdot
  \frac{1}{2}
  \left[\log(h+y_2\xi)-\frac{y_2}{y_1+y_2}\log(1+h\xi)\right]''\Bigg|_{\xi=-\frac{y_2}{hr}}\\
  &=&\frac{\beta\cdot y_1(1-y_2)^2}{2 h^2}
  \left[-\frac{y_2^2}{(h+y_2\xi)^2}+\frac{y_2}{y_1+y_2}\frac{h^2}{(1+h\xi)^2}\right]\Bigg|_{\xi=-\frac{y_2}{hr}}\\
  &=&\frac{\beta y_1^2y_2}{2(y_1+y_2)^2}.
\end{eqnarray*}

For (\ref{E1betay}), we have 
\begin{eqnarray*}
  &&\frac{\beta\cdot (1-y_2)}{4\pi i} \oint_{|\xi|=1}
  \left[\log\frac{|h+y_2\xi|^2}{(1-y_2)^2}-\frac{y_2}{y_1+y_2}\log\frac{|1+h\xi|^2}{(1-y_2)^2}-\log(y_1+y_2)\right]\\
  &&\quad\quad\quad\quad\quad\quad\quad\quad\quad\quad\quad\quad\quad\cdot
  \frac{\xi^2-\frac{y_2}{h^2r^2}}{(\xi+\frac{y_2}{hr})^2}\left[
    \frac1{\xi-\frac{\sqrt{y_2}}{hr}}+\frac1{\xi+\frac{\sqrt{y_2}}{hr}}-\frac2{\xi+\frac{y_2}{hr}}\right]d\xi\\
  &=&\frac{\beta\cdot (1-y_2)y_2}{2\pi i \cdot h} \oint_{|\xi|=1}
  \left[\log\frac{|h+y_2\xi|^2}{(1-y_2)^2}-\frac{y_2}{y_1+y_2}\log\frac{|1+h\xi|^2}{(1-y_2)^2}-\log(y_1+y_2)\right]\\
  && \quad\quad\quad\quad\quad\quad\quad\quad\quad\quad\quad\quad\quad
  \quad\quad\quad\quad\quad\quad\quad\quad\quad\quad\quad\quad\cdot
  \left[\frac{\xi+\frac{1}{hr}}{(\xi+\frac{y_2}{hr})^3}\right]
  d\xi\\
  &=&\frac{\beta\cdot (1-y_2)y_2}{2\pi i \cdot h} \oint_{|\xi|=1}2
  \mathcal{R}\Big\{\log(h+y_2\xi)-\frac{y_2}{y_1+y_2}\log(1+h\xi)\Big\}
  \cdot \left[\frac{\xi+\frac{1}{hr}}{(\xi+\frac{y_2}{hr})^3}\right]
  d\xi\\
  &=&\frac{\beta\cdot (1-y_2)y_2}{2\pi i \cdot h}\oint_{|\xi|=1}\Big[
    \log(h+y_2\xi)+\log(h+y_2\overline{\xi})\\
    &&\quad\quad\quad\quad\quad\quad\quad\quad
    -\frac{y_2}{y_1+y_2}\left(\log(1+h\xi)+\log(1+h\overline{\xi})\right)
    \Big]\cdot
  \left[\frac{\xi+\frac{1}{hr}}{(\xi+\frac{y_2}{hr})^3}\right]d\xi\\
  &=&\frac{\beta\cdot (1-y_2)y_2}{2\pi i \cdot
    h}\oint_{|\xi|=1}\left[\log(h+y_2\xi)-\frac{y_2}{y_1+y_2}\log(1+h\xi)\right]\\
  &&\quad\quad\quad\quad\quad\quad\quad\cdot\Big[\frac{\xi+\frac{1}{hr}}{(\xi+\frac{y_2}{hr})^3}+
    \frac{\frac{h^2r^2}{y_2^3}(\xi+hr)}{(\xi+\frac{hr}{y_2})^3}\Big]d\xi\\
  &=&\frac{\beta\cdot (1-y_2)y_2}{2\pi i \cdot h}2\pi i \cdot
  \frac1{2}\left([\log(h+y_2\xi)-\frac{y_2}{y_1+y_2}\log(1+h\xi)]
  \cdot(\xi+\frac{1}{hr})\right)''\Bigg|_{\xi=-\frac{y_2}{hr}}\\
  &=&\frac{\beta y_1y_2^2}{2(y_1+y_2)^2}.
\end{eqnarray*}
Therefore, 
\begin{eqnarray}
  m_1(f)&=&
  \frac{1}{2}\Big[\log\left(\frac{y_1+y_2-y_1y_2}{y_1+y_2}\right)
    -\frac{y_1}{y_1+y_2}\log(1-y_2)-\frac{y_2}{y_1+y_2}\log(1-y_1)\nonumber\\
    \quad &+& \frac{6 y_1^2y_2}{(y_1+y_2)^2}+\frac{6
      y_1y_2^2}{(y_1+y_2)^2}\Big].\nonumber
\end{eqnarray}

For covariance, we have 
\begin{eqnarray*}
  &&\oint_{|\xi|=1}
  \frac{f\left(\frac{1+h^2+2h\mathcal{R}(\xi)}{(1-y_2)^2}\right)}{(\xi+\frac{y_2}{hr})^2}d\xi\\
  &=& \oint_{|\xi|=1}
  \left[\log\frac{|h+y_2\xi|^2}{(1-y_2)^2}-\frac{y_2}{y_1+y_2}\log\frac{|1+h\xi|^2}{(1-y_2)^2}-\log(y_1+y_2)\right]
  \cdot \frac{1}{(\xi+\frac{y_2}{hr})^2} d\xi\\
  &=& \oint_{|\xi|=1} 2
  \mathcal{R}\Big\{\log(h+y_2\xi)-\frac{y_2}{y_1+y_2}\log(1+h\xi)\Big\}\cdot
  \frac{1}{(\xi+\frac{y_2}{hr})^2} d\xi\\
  &=& \oint_{|\xi|=1}\Bigg\{\log(h+y_2\xi)+\log(h+y_2\overline{\xi})\\
  && \quad\quad
  -\frac{y_2}{y_1+y_2}\Big[\log(1+h\xi)+\log(1+h\overline{\xi})\Big]\Bigg\}
  \cdot\frac{1}{(\xi+\frac{y_2}{hr})^2} d\xi\\
  &=&
  \oint_{|\xi|=1}\left[\log(h+y_2\xi)-\frac{y_2}{y_1+y_2}\log(1+h\xi)\right]
  \left[\frac{1}{(\xi+\frac{y_2}{hr})^2}
    +\frac{\left(\frac{hr}{y_2}\right)^2}{(\xi+\frac{hr}{y_2})^2}
    \right] d\xi\\
  &=& 2\pi i \cdot \left[\log(h+y_2\xi)-\frac{y_2}{y_1+y_2}\log(1+h\xi)\right]'\Bigg|_{\xi=-\frac{y_2}{hr}}\\
  &=& \pi i \cdot
  \left[\frac{y_2}{h+y_2\xi}+\frac{y_2}{y_1+y_2}\frac{h}{1+h\xi}\right]\Bigg|_{\xi=-\frac{y_2}{hr}}\\
  &=&0.
\end{eqnarray*}

So, (\ref{cov1betax}) becomes,
\begin{eqnarray*}
  &&-\frac{\beta \cdot (y_1+y_2)(1-y_2)^2}{4\pi^2h^2}
  \oint_{|\xi_1|=1}
  \frac{f\left(\frac{1+h^2+2h\mathcal{R}(\xi_1)}{(1-y_2)^2}\right)}{(\xi_1+\frac{y_2}{hr_1})^2}d\xi_1
  \oint_{|\xi_2|=1}
  \frac{f\left(\frac{1+h^2+2h\mathcal{R}(\xi_2)}{(1-y_2)^2}\right)}{(\xi_2+\frac{y_2}{hr_2})^2}d\xi_2=0
\end{eqnarray*}
Finally, 
\begin{equation}
  \upsilon_1(f)=-\frac{2y_2^2}{(y_1+y_2)^2}\log(1-y_1)-\frac{2y_1^2}{(y_1+y_2)^2}\log(1-y_2)
  -2\log\frac{y_1+y_2}{y_1+y_2-y_1y_2}.\nonumber
\end{equation}

\end{document}